\documentclass[12pt]{article}%
\usepackage{amsfonts}
\usepackage{mitpress}%
\usepackage{amsmath}%
\usepackage{amssymb}%
\usepackage{graphicx}
\providecommand{\U}[1]{\protect\rule{.1in}{.1in}}
\newtheorem{theorem}{Theorem}

\newtheorem{claim}[theorem]{Claim}

\newtheorem{conjecture}[theorem]{Conjecture}

\newtheorem{lemma}[theorem]{Lemma}

\newtheorem{proposition}[theorem]{Proposition}

\newenvironment{proof}[1][Proof]{\noindent\textbf{#1.} }{\ \rule{0.5em}{0.5em}}
\newdimen\dummy
\dummy=\oddsidemargin
\begin{document}

\title{Inconsistency Probability of Sparse Equations over $\mathbb{F}_{2}$}
\author{Peter Horak$^{1}$
$\vert$
Igor Semaev$^{2}$\\$^{1}$University of Washington, USA
$\vert$
$^{2}$University of Bergen, Norway\\Correspondence: Peter Horak (horak@uw.edu)\\Keywords: Sparse polynomial systems; random equations over $\mathbb{F}_{2},$
random SAT.}
\maketitle

\begin{abstract}
Let $n$ denote the number of variables and $m$ the number of equations in a
sparse polynomial system over the binary field. We study the inconsistency
probability of randomly generated sparse polynomial systems over the binary
field, where each equation depends on at most $k$ variables and the number of
variables grows. Associating the system with a hypergraph, we show that the
inconsistency probability depends strongly on structural properties of this
hypergraph, not only on $n,m,$ and $k.$ Using inclusion--exclusion, we derive
general bounds and obtain tight asymptotics for complete $k$-uniform
hypergraphs. In the 2-sparse case, we provide explicit formulas for paths and
stars, characterize extremal trees and forests, and conjecture a formula for
cycles. These results have implications for SAT solving and cryptanalysis.

\end{abstract}

\section{Introduction}

Let $n,m,k$ be natural numbers, and $X$ denote a set of $n$ variables taking
values in the binary field $\mathbb{F}_{2}$. Also, let $X_{i},1\leq i\leq m,$
be non-emtpy subsets of $X$ and let $f_{i}(X_{i}),i=1,\ldots,m,$ be
polynomials over $\mathbb{F}_{2}$ that depend only on the variables in $X_{i}
$.

\subsection{Problem}

In this work, we aim to analyze the solutions in $\mathbb{F}_{2}^{n}${} to the
system of equations
\begin{equation}
f_{1}(X_{1})=0,\,f_{2}(X_{2})=0,\ldots,f_{m}(X_{m})=0.\label{system}%
\end{equation}
Such system is called $k$-\emph{sparse} if $|X_{i}|\leq k$, where $k$ is a
fixed small number (such as $3$ or $4$), while $n$ is large. As usual,
(\ref{system}) is called \emph{consistent} (solvable) if it has a solution in
$\mathbb{F}_{2}^{n},$ otherwise it will be called \emph{inconsistent}.

We say that the system \eqref{system} is \emph{randomly generated} if the
variable subsets $X_{i}$ are fixed (allowing for possible repetitions), and
the polynomials $f_{i}$ are chosen uniformly and independently at random. A
hypergraph $\mathcal{X}=(X,\{X_{1},X_{2},\ldots,X_{m}\}),$ where the vertex
set is $X$ and the edges are subsets $X_{i}$ may be associated with
(\ref{system}). In this paper, we study inconsistency probability
$p(\mathcal{X)}$ for randomly generated systems (\ref{system}). In particular,
we demonstrate that $p(\mathcal{X)}$ depends significantly on the structural
properties of the underlying hypergraph $\mathcal{X}$, and not solely on the
parameters $n,m,$ and $k$.\ 

\subsection{Motivation}

It was observed in \cite{iS2013} that modern SAT solvers such as MiniSat
\cite{MiniSat} exhibit sub-exponential behavior - rather than exponential -
when applied to randomly generated sparse equation systems \eqref{system}, as
the number of variables increases. This is a surprising result, given that
solving such systems (i.e., determining their satisfiability) is NP-complete,
as the problem is reducible to a SAT problem.

In this work, we take initial steps toward understanding and explaining this
phenomenon. In Section \ref{main_results}, we demonstrate that the
inconsistency probability $p(\mathcal{X)}$ is significantly higher in the
sparse case. This likely explains the surprising efficiency of SAT solvers in
such instances. Once a small number of variables are fixed to constants, the
solver often detects a contradiction in a resulting inconsistent subsystem.
When a contradiction is found, the current assignment is rejected and a new
one is tried---effectively navigating a search tree to solve the system.

Building on this idea, we show that a lower bound on the inconsistency
probability for certain randomly generated systems can lead to an upper bound
on the expected complexity of solving \eqref{system}. Suppose the system
(\ref{system}) is divided into two subsystems:
\begin{equation}
f_{1}(X_{1})=0,\ldots,f_{k}(X_{k})=0,\label{part1}%
\end{equation}
and
\begin{equation}
f_{k+1}(X_{k+1})=0,\ldots,f_{m}(X_{m})=0,\label{part2}%
\end{equation}
where $Y_{1}=X_{1}\cup\ldots\cup X_{k}$ and $Y_{2}=X_{k+1}\cup\ldots\cup
X_{m}$, and $Y\subseteq Y_{1}\cap Y_{2}$.

Let $q_{1}$ and $q_{2}$ denote the probabilities that (\ref{part1}) and
(\ref{part2}), respectively, are consistent for a given assignment $Y=a$.
These probabilities do not depend on $a$ since the equations are randomly
generated. Let $Q_{1}$ and $Q_{2}${} denote the average complexities of
solving (\ref{part1}) and (\ref{part2}) after this fixation. Then, the average
complexity of solving (\ref{system}) is:%
\begin{equation}
2^{|Y|}\left(  Q_{1}+q_{1}Q_{2}+q_{1}q_{2}Q\right)  ,\label{complexity}%
\end{equation}
where $Q$ represents the complexity of combining the solutions of
(\ref{part1}) and (\ref{part2}) that agree on the variables in $Y$, into a
solution of (\ref{system}).

To justify this, note:

\begin{itemize}
\item $2^{|Y|}Q_{1}$ is the expected complexity of solving (\ref{part1})
across all fixations of $Y$.

\item The expected number of fixations for which (\ref{part1}) is consistent
is $2^{|Y|}q_{1}.$

\item For each such fixation, (\ref{part2}) must be solved, contributing
$2^{|Y|}q_{1}Q_{2}$ to the total complexity.

\item The number of fixations $Y=a$ where both parts are consistent is
$2^{|Y|}q_{1}q_{2}Q${}, and combining their solutions adds the final term.
\end{itemize}

This leads to the total complexity bound stated in \eqref{complexity}.

Clearly, the system can be split into more than two subsystems, potentially
further reducing complexity. In practice, this can be done dynamically by
attempting to identify inconsistent subsystems under each variable fixation.
Hence, upper bounds on the consistency probabilities $q_{1},q_{2}${}, or lower
bounds on the corresponding inconsistency probabilities, imply upper bounds on
the average complexity of solving (\ref{system}).

\subsection{Sparse equations in cryptanalysis}

The definition of the sparse equation system \eqref{system} has been motivated
by applications in cryptanalysis, where the goal is to invert a discrete
function in order to break a cipher. The mappings implemented by ciphers
typically consist of compositions of several functions, each involving only a
small number of variables---such as S-boxes, permutations, and linear layers
in modern block ciphers. These mappings can be represented using a small
number of simple gates, as they are designed to be efficiently computable. To
simplify the equations describing the cipher and to obtain a sparse system of
type \eqref{system}, intermediate variables may be introduced. Although the
resulting system may involve a large number of variables, it usually remains
manageable in terms of memory usage and computational manipulation. The
interested reader is directed to \cite{Bard} and\ \cite{Odlyzko}.

\subsection{Sparse equations and $k$-SAT problem}

The probability of consistency (satisfiability) has been extensively studied
for $k$-SAT problems in relation to the so-called phase transition phenomenon.
The Boolean $k$-satisfiability ($k$-SAT) problem involves determining whether
a CNF (conjunctive normal form) formula - composed of a conjunction of clauses
of length $k$ - is satisfiable. Each clause is a disjunction of $k$ literals,
where a literal is either a variable $x\in X$ or its negation $\bar{x}$. We
assume that no clause contains both a variable and its negation
simultaneously. The problem of solving the system of $k$-sparse equations
\eqref{system} can be represented as an instance of the $k$-SAT problem, and
vice versa.

For a variable set $|X|=n$, the number of possible clauses of size $k$ is
${\binom{n}{k}}2^{k}$. Let $m$ be a natural number. A random instance of
$k$-SAT is one in which the CNF consists of $m$ clauses chosen independently
and uniformly at random---with replacement---from the set of all possible
$k$-clauses. (Other notions of randomness have also been considered.)

It is widely believed that random $k$-SAT instances exhibit a phase
transition. Specifically, it was conjectured in \cite{CR} that for every
$k\geq2$, there exists a threshold $r_{k}$ such that for any $\varepsilon>0$ ,
the following holds as $n\rightarrow\infty:$ if $m/n<r_{k}-\varepsilon$, then
the CNF is satisfiable with high probability; if $m/n>r_{k}+\varepsilon$ the
CNF is unsatisfiable with high probability.

This conjecture was proved for $k=2$ with the threshold $r_{2}=1$
\cite{Goerdt92}. For $k=3,$ it has been shown that if $m/n<3.145$, a random
CNF is almost always satisfiable, and if $m/n>4.596$, it is almost always
unsatisfiable, \cite{Ach00} and \cite{JSV00}. The threshold for $3$-SAT is
believed to be around $4.2,$ \cite{GS}. For sufficiently large $k$, the
conjecture was proved in \cite{DSS}.

\section{Bounds on inconsistency probability}

\label{main_results}

The main results of this section provide bounds on inconsistency probability
$p(\mathcal{X})$ for both general hypergraph $\mathcal{X}$ and the complete
$k$-uniform hypergraph $\mathcal{X}_{k}=(X,\{X_{1},X_{2},\ldots,X_{m}\})$,
where $X_{i}$ are all different $k$-subsets of $X;$ i.e., $m={\binom{n}{{k}}}
$.\ We begin with two propositions that will be used frequently in the proofs
throughout this section.

\subsection{Auxiliary results}

\begin{proposition}
\label{AA}Let (\ref{system}) be a randomly generated system with $m$
equations, and $\left\vert X\right\vert =n$. Then

\begin{enumerate}
\item each equation $f_{i}\mathcal{(}X_{i})=0$ taken separately is
inconsistent with probability $\frac{1}{2^{2^{|X_{i}|}}},$

\item any $a\in\mathbb{F}_{2}^{n}$ is a solution to the equation
$f_{i}\mathcal{(}X_{i})=0$ with probability $1/2,$

\item any $a\in\mathbb{F}_{2}^{n}$ is a solution of entire system
(\ref{system}) with probability $\frac{1}{2^{m}.}$
\end{enumerate}
\end{proposition}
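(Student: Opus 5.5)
The key observation is that choosing a polynomial $f_i(X_i)$ uniformly at random over $\mathbb{F}_{2}$ is the same as choosing a Boolean function $\mathbb{F}_{2}^{|X_{i}|}\to\mathbb{F}_{2}$ uniformly at random. Every such function has a unique reduced polynomial representation (degree at most $1$ in each variable). There are $2^{2^{|X_i|}}$ of them, and equivalently we may regard $f_i$ as its truth table: a vector of $2^{|X_i|}$ independent fair bits, one value $f_i(b)$ for each $b\in\mathbb{F}_{2}^{|X_i|}$. I will state this identification first and make the implicit convention explicit: ``uniformly at random'' means uniform over functions, i.e.\ over reduced polynomials. This is the only point needing care, since non-reduced polynomials would otherwise have to be identified modulo $x^2-x$. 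All three items then follow by elementary counting.

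For item 1, the equation $f_i(X_i)=0$ is inconsistent exactly when $f_i(b)=1$ for every $b\in\mathbb{F}_{2}^{|X_i|}$, that is, when $f_i$ is the constant function $1$. Exactly one of the $2^{2^{|X_i|}}$ equally likely functions has this property, so the probability is $2^{-2^{|X_i|}}$.

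For item 2, fix $a\in\mathbb{F}_{2}^{n}$ and let $b$ be its restriction to the coordinates in $X_i$. Then $a$ solves $f_i=0$ if and only if $f_i(b)=0$. This is a single bit of a uniform truth table, so it equals $0$ with probability $1/2$. Alternatively, the map $f\mapsto f+1$ is a bijection that swaps the functions with $f(b)=0$ and those with $f(b)=1$, so the two sets have equal size.

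For item 3, the polynomials $f_1,\dots,f_m$ are chosen independently, so the events $\{f_i(a|_{X_i})=0\}$ are independent. This holds even when the sets $X_i$ repeat or overlap, because the randomness lies in the functions, not in the variable sets. By item 2 each event has probability $1/2$, so the product is $2^{-m}$. I do not expect any real obstacle here beyond stating the randomness model precisely.
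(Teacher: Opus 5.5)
Your proposal is correct and follows essentially the paper's argument. The paper identifies each polynomial with its set of roots in $\mathbb{F}_{2}^{|X_i|}$, which is the complement of the support of your truth table. It then notes that only $f=1$ has an empty root set, that each point lies in exactly half of all subsets, and multiplies over the independently chosen equations; your explicit remark that you work with reduced polynomials makes precise a convention the paper leaves implicit.
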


\begin{proof}
Let $\left\vert X_{i}\right\vert =k.$ There is a natural one-to-one
correspondence between the set of all $2^{2^{k}}$ Boolean polynomials over
$\mathbb{F}_{2}$ in $k$ variables, and the set of all subsets of
$\mathbb{F}_{2}^{k},$ where each polynomial corresponds to its set of roots.
Exactly one polynomial, namely $f(X_{i})=1$, corresponds to the empty set.
Hence, the probability that a randomly chosen polynomial $f_{i}\mathcal{(}%
X_{i})$ results in an inconsistent equation $f_{i}\mathcal{(}X_{i})=0$ is
$\frac{1}{2^{2^{k}}}.$ Since any $a\in\mathbb{F}_{2}^{k}$ lies in \ exactly a
half of all subsets of $\mathbb{F}_{2}^{k},$ the probability that $a$ is a
root of a randomly chosen polynomial is $\frac{1}{2}.$ As the equations in the
system (\ref{system}) are independently generated, the probability that $a$ is
a solution of all $m$ equations is $\frac{1}{2^{m}}.$
\end{proof}

The following proposition is almost trivial and was already used for
estimating the complexity of solving (\ref{system}) in \cite{iS09, iS2013}.

\begin{proposition}
\label{BB} Let $\mathcal{X}=(X,\{X_{1},X_{2},\ldots,X_{m}\})$ and let
(\ref{system}) be a randomly generated equation system. Then

\begin{enumerate}
\item $p(\mathcal{X})\geq1-\prod_{i=1}^{m}(1-2^{-2^{|X_{i}|}})$.

\item If $X_{1},\ldots,X_{m}$ are pairwise disjoint, then $p(\mathcal{X}%
)=1-\prod_{i=1}^{m}(1-2^{-2^{|X_{i}|}}).$
\end{enumerate}
\end{proposition}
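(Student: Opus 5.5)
The plan is to work with the events $A_i=\{f_i(X_i)=0$ is inconsistent on its own$\}$, that is, $f_i\equiv 1$ as a function of $X_i$. By part 1 of Proposition \ref{AA}, $\Pr(A_i)=2^{-2^{|X_i|}}$.

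The observation driving both parts is that if some single equation is inconsistent, then the whole system (\ref{system}) is inconsistent. Hence the event "the system is inconsistent" contains $A_1\cup\cdots\cup A_m$. The polynomials $f_i$ are chosen independently, so the events $A_i$ are independent, even when the sets $X_i$ overlap or repeat: each $A_i$ depends only on $f_i$. This gives
\[
p(\mathcal{X})\geq \Pr\Bigl(\bigcup_{i=1}^m A_i\Bigr)=1-\prod_{i=1}^m\bigl(1-\Pr(A_i)\bigr)=1-\prod_{i=1}^{m}\bigl(1-2^{-2^{|X_i|}}\bigr),
\]
which is part 1.

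For part 2 we need the reverse inclusion when $X_1,\ldots,X_m$ are pairwise disjoint. Suppose no $A_i$ occurs, so each equation $f_i(X_i)=0$ has a root $a_i\in\mathbb{F}_2^{X_i}$. Because the variable sets are disjoint, the partial assignments $a_i$ never conflict, so they can be glued into a single assignment on $X_1\cup\cdots\cup X_m$. Any variables of $X$ lying outside every $X_i$ can be set arbitrarily. The resulting vector in $\mathbb{F}_2^n$ satisfies every equation, so the system is consistent.

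Thus, under disjointness, inconsistency is exactly the event $\bigcup_i A_i$, and the same product computation gives equality. No step is a real obstacle. The only points needing care are noting that independence of the $A_i$ comes from the independent choice of the $f_i$ rather than from disjointness, and that the gluing step is precisely where disjointness is used.
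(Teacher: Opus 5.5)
Your proof is correct and follows the same route as the paper: part 1 bounds the inconsistency of the system from below by the event that some single equation is inconsistent, using Proposition \ref{AA} and the independent choice of the $f_i$, and part 2 shows that under disjointness these two events coincide. Your version is slightly more precise than the paper's. The paper attributes part 2 to the events becoming independent under disjointness, whereas, as you note, independence always holds, and disjointness is needed only for the gluing step showing that individually consistent equations give a solution of the whole system.
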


\begin{proof}
If the system (\ref{system}) is consistent then each of the equations
$f_{i}(X_{i})=0$ is consistent. By Proposition \ref{AA} the probability that
$f_{i}(X_{i})=0$ has a solution (i.e., is consistent) is $1-2^{-2^{|X_{i}|}}$.
So, $1-p(\mathcal{X})\leq\prod_{i=1}^{m}(1-2^{-2^{|X_{i}|}})$. That implies
the first statement.

If the variable sets $X_{1},\ldots,X_{m}$ are pairwise disjoint, then the
events are independent, which implies the second statement. Clearly, in this
case, system (\ref{system}) has the maximum probability of consistency.
\end{proof}

We illustrate how the size of the variable subsets influences the convergence
of the inconsistency probability $p(\mathcal{X}).$

Let $\mathcal{X}^{\prime}=(X,\{X_{1},X_{2},\ldots,X_{m}\}),$ where $|X_{i}|=k$
and let $\mathcal{X}^{\prime\prime}=(X,\{X_{1},X_{2},\ldots,X_{m}\}),$ where
$X_{i}=X$ for all $i.$ Proposition \ref{BB} implies that $p(\mathcal{X}%
^{\prime})\geq1-\left(  1-2^{-2^{k}}\right)  ^{m}$ and therefore
$p(\mathcal{X}^{\prime})\rightarrow1$ as $m$ grows and $k$ is fixed.

As for $\mathcal{X}^{\prime\prime}$, by Proposition \ref{AA}, the probability
that a fixed assignment $X=a$ is a solution to (\ref{system}) is $1/2^{m}$.
Hence, $a$ is not a solution with probability $1-1/2^{m}$. Overall, since $a$
can take $2^{n}$ possible values, we have $p(\mathcal{X}^{\prime\prime
})=\left(  1-\frac{1}{2^{m}}\right)  ^{2^{n}}$.

Therefore the probability $p(\mathcal{X}^{\prime\prime})$ is close to $1$ only
if $m$ is significantly larger than $n$. In contrast, $p(\mathcal{X}^{\prime
})\rightarrow1$ whenever $m$ increases. We note that in estimating
$p(\mathcal{X}^{\prime})$ the structural properties of the hypergraph
$\mathcal{X}^{\prime}$ have not been considered.

\subsection{Inconsistency probability for systems with $2$ and $3$ equations}

Proposition \ref{m=2} provides a convenient formula for $p(\mathcal{X})$ when
$m=2$. In Proposition \ref{m=3}, we show that the case $m=3$ reduces to
computing the probability that a random tripartite graph contains no $3$-cycle.

\begin{proposition}
\label{m=2} Let $\mathcal{X}=(X,\{X_{1},X_{2}\})$ and let \eqref{system} be a
randomly generated equation system. Then
\[
p(\mathcal{X})=\left(  1-(1-\frac{1}{2^{2^{|X_{1}\setminus X_{2}|}}}%
)(1-\frac{1}{2^{2^{|X_{2}\setminus X_{1}|}}})\right)  ^{2^{|X_{1}\cap X_{2}|}%
}.
\]

\end{proposition}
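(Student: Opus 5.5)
Set $A=X_1\setminus X_2$, $B=X_2\setminus X_1$ and $C=X_1\cap X_2$. I will condition on the values of the shared variables in $C$ and use the fact that the system decouples once those values are fixed. The variables in $X\setminus(X_1\cup X_2)$ do not appear in either equation, so they can be ignored. Write a random polynomial $f_1(X_1)$ through its set of roots $S_1\subseteq\mathbb{F}_2^{|A|}\times\mathbb{F}_2^{|C|}$; by the correspondence in the proof of Proposition \ref{AA}, $S_1$ is a uniformly random subset. For each $c\in\mathbb{F}_2^{|C|}$, let $S_1(c)=\{a:(a,c)\in S_1\}$ be the slice over $c$. Because $S_1$ is uniform, each point is included independently with probability $1/2$. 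Since the slices over different $c$ partition $\mathbb{F}_2^{|A|}\times\mathbb{F}_2^{|C|}$, the slices $S_1(c)$ are independent uniform subsets of $\mathbb{F}_2^{|A|}$. Define $S_2(c)\subseteq\mathbb{F}_2^{|B|}$ in the same way. The families $\{S_1(c)\}$ and $\{S_2(c)\}$ are independent of each other because $f_1$ and $f_2$ are chosen independently.

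The system is consistent exactly when some $c$ has both $S_1(c)\neq\emptyset$ and $S_2(c)\neq\emptyset$. Given such a $c$, pick $a\in S_1(c)$ and $b\in S_2(c)$; the assignment $(a,b,c)$ solves both equations because $A$ and $B$ are disjoint, and the free variables may be set arbitrarily. Conversely, any solution restricted to $C$ gives such a $c$. So the system is inconsistent if and only if, for every $c$, the event $E_c=\{S_1(c)=\emptyset\text{ or }S_2(c)=\emptyset\}$ holds.

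By the independence of the slices, the events $E_c$ for the $2^{|C|}$ values of $c$ are mutually independent, and all have the same probability. Hence $p(\mathcal{X})=\Pr(E_c)^{2^{|C|}}$. A uniform random subset of $\mathbb{F}_2^{|A|}$ is empty with probability $2^{-2^{|A|}}$. Therefore $\Pr(E_c)=1-(1-2^{-2^{|A|}})(1-2^{-2^{|B|}})$, which gives the stated formula.

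The only step that needs care is the independence of the slices. I would justify it by identifying a uniform random subset with independent fair coin flips, one per point, which is the counting argument already used in Proposition \ref{AA}. The edge cases also check out. If $A=\emptyset$, then $\mathbb{F}_2^{0}$ has one point, the slice is empty with probability $1/2=2^{-2^0}$, and the formula still holds. If $C=\emptyset$, the exponent is $1$ and the result agrees with Proposition \ref{BB}.
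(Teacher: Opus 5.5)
Your proof is correct and takes essentially the same approach as the paper: fix the shared variables $X_1\cap X_2$, note that the two restricted equations become independent, each with no solution with probability $2^{-2^{|X_1\setminus X_2|}}$ and $2^{-2^{|X_2\setminus X_1|}}$ respectively, and multiply over the $2^{|X_1\cap X_2|}$ fixations. Your slice argument, which views a uniform root set as independent fair coins one per point, also justifies the independence across different fixations of $X_1\cap X_2$, a step the paper only asserts.
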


\begin{proof}
\textbf{\ }For every fixed assignment of the variables in $X_{1}\cap X_{2}${},
the remaining variables involved in the equations $f_{1}(X_{1})=0$ and
$f_{2}(X_{2})=0$ become disjoint. Under such a fixed assignment, the events
that $f_{1}$ and $f_{2}$ are satisfied become independent. Therefore, the
probability that the system is inconsistent (i.e., that at least one of the
equations is not satisfied) is $1-(1-\frac{1}{2^{2^{|X_{1}\setminus X_{2}|}}%
})(1-\frac{1}{2^{2^{|X_{2}\setminus X_{1}|}}})$. Since the assignments to
$X_{1}\cap X_{2}$ are independent, this implies the claim.
\end{proof}

Let $\mathcal{X}=(X,\{X_{1},X_{2},X_{3}\}),$ and let
\begin{equation}
f_{1}(X_{1})=0,f_{2}(X_{2})=0,f_{3}(X_{3})=0\label{system_m=3}%
\end{equation}
be a system of $m=3$ equations. Denote $\{i,j,k\}=\{1,2,3\}$. For every
fixation of variables $X_{i}\cap X_{j}\cap X_{k}$ by constants, we will define
a tripartite graph $G$ on vertex sets $Y_{ij},\,\{i,j\}\subseteq\{1,2,3\}$
defined below. Let
\[
X_{ij}=(X_{i}\cap X_{j})\setminus(X_{i}\cap X_{j}\cap X_{k}).
\]
The vertices $Y_{ij},$ are all binary $|X_{ij}|$-strings, thus the values of
$X_{ij}$. If $X_{ij}=\emptyset$, then $Y_{ij}$ consists of only one vertex.
So, $|Y_{ij}|=2^{|X_{ij}|}$ anyway. Assume a solution to \eqref{system_m=3}.
Denote the projection of that solution to the variables $X_{12}\cup X_{13}$ by
$(a,b)$. Similarly, let the projection of the solution to the variables
$X_{13}\cup X_{23}$ be $(b,c)$, and to the variables $X_{23}\cup X_{12}$ be
$(c,a)$. Thus, $(a,b)$ is an edge in the graph $G$ between $Y_{12}$ and
$Y_{13}$, and $(b,c)$ is an edge between $Y_{12}$ and $Y_{23}$, and $(c,a)$ is
an edge between $Y_{13}$ and $Y_{23}$. Those edges constitute a cycle of
length $3$ in the graph. Obviously, the system \eqref{system_m=3} admits a
solution if and only if the graph $G$ has a cycle of length $3$.

When the system \eqref{system_m=3} is randomly generated, then $G$ is a
randomly generated tripartite graph, where the probability of an edge between
$Y_{ik}$ and $Y_{kj}$ is $q_{k}=1-2^{-2^{|X_{k}\setminus(X_{i}\cup X_{j})|}}$.
Let $q$ denote the probability that a graph $G$ generated this way does not
have a cycle of length $3$. Therefore, $q$ is the probability that the system
\eqref{system_m=3} is inconsistent after the fixation of variables $X_{i}\cap
X_{j}\cap X_{k}$. The probability $q$ does not depend on the fixation itself.
Since the assignments to $X_{1}\cap X_{2}\cap X_{3}$ are independent, we have

\begin{proposition}
\label{m=3}Let $\mathcal{X}=(X,\{X_{1},X_{2},X_{3}\})$ and \eqref{system} be a
randomly generated equation system. Then%
\[
p(\mathcal{X})=q^{2^{|X_{1}\cap X_{2}\cap X_{3}|}}
\]

\end{proposition}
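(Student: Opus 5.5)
We will argue exactly as in the proof of Proposition \ref{m=2}: split the probability space according to the value $c$ of the variables in $Z=X_{1}\cap X_{2}\cap X_{3}$, and show two things. First, the restricted systems obtained for different values of $c$ are mutually independent. Second, each of them is inconsistent with the same probability $q$. The system (\ref{system_m=3}) is inconsistent if and only if, for every $c\in\mathbb{F}_{2}^{|Z|}$, the system obtained by substituting $Z=c$ has no solution in the remaining variables. So $p(\mathcal{X})$ is the probability of the intersection of $2^{|Z|}$ events, and the formula follows once these two facts are established.

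\textbf{Independence.} A random polynomial $f_{i}(X_{i})$ is the same thing as a uniformly random subset $S_{i}\subseteq\mathbb{F}_{2}^{X_{i}}$ of roots, as in the proof of Proposition \ref{AA}. This is equivalent to choosing, for each point of $\mathbb{F}_{2}^{X_{i}}$, an independent fair coin deciding membership in $S_{i}$. The points of $\mathbb{F}_{2}^{X_{i}}$ split into disjoint blocks according to their projection to $Z\subseteq X_{i}$. The restricted system for $Z=c$ depends only on the coins in the blocks indexed by $c$, across $i=1,2,3$. Different values of $c$ use disjoint sets of independent coins, so the restricted systems are independent.

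\textbf{Equal probabilities.} For a fixed $c$, the restricted system for $Z=c$ has the same joint distribution as a randomly generated system on the variable sets $X_{i}\setminus Z$. By the discussion preceding the proposition, it is inconsistent exactly when the associated random tripartite graph $G$ has no triangle. It remains to check that $G$ is a random tripartite graph with independent edges of the stated probabilities $q_{k}$. Fix $k$ and a pair $(a,b)\in Y_{ik}\times Y_{kj}$. The edge $(a,b)$ is present iff some assignment to the private variables $X_{k}\setminus(X_{i}\cup X_{j})$, completed by $Z=c$ and by $a,b$, lies in $S_{k}$. This depends on $2^{|X_{k}\setminus(X_{i}\cup X_{j})|}$ distinct coins, so it has probability $q_{k}$. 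Distinct pairs $(a,b)$, or distinct $k$, use disjoint sets of coins, hence the edges are independent. Consequently the inconsistency probability equals $q$ for every $c$, independent of $c$.

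Multiplying over the $2^{|X_{1}\cap X_{2}\cap X_{3}|}$ values of $c$ gives $p(\mathcal{X})=q^{2^{|X_{1}\cap X_{2}\cap X_{3}|}}$. The only delicate point is the bookkeeping in the independence claims: one must verify that the coordinates of $X_{k}$ are exactly partitioned into $Z$, $X_{ik}$, $X_{jk}$ and the private part. This ensures both that different values of $c$ use disjoint coins and that distinct edges of $G$ use disjoint coins. I expect this check to be the main, though routine, obstacle.
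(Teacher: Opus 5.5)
Your proposal is correct and follows the paper's argument. You fix the variables in $X_{1}\cap X_{2}\cap X_{3}$, identify inconsistency of each restricted system with the random tripartite graph (edge probabilities $q_{k}$) having no triangle, and multiply over the $2^{|X_{1}\cap X_{2}\cap X_{3}|}$ independent fixations; your coin bookkeeping (disjoint blocks for distinct fixations, disjoint point sets for distinct edges) supplies the independence justifications that the paper only asserts.
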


Estimating $q$ seems a difficult problem.

\subsection{Main Results}

By Proposition \ref{BB}, for every hypergraph $\mathcal{X}=(X,\{X_{1}%
,X_{2},\ldots,X_{m}\})$ it holds that $p(\mathcal{X})\geq1-\prod_{i=1}%
^{m}(1-2^{-2^{|X_{i}|}})$. Therefore, $p(\mathcal{X})\rightarrow1$ as $m$
grows. This bound treats the sets independently and does not reflect how they
intersect or overlap, which can lead to suboptimal estimates when the
intersection structure is nontrivial. In general, $p(\mathcal{X})$ tends to
$1$ much more rapidly. In this subsection, we prove this fact for a general
hypergraph $\mathcal{X}=(X,\{X_{1},X_{2},\ldots,X_{m}\}),$ as well as for the
complete $k$-uniform hypergraph $\mathcal{X}_{k}$. \ 

The proof of both bounds is based on the inclusion-exclusion principle. Let
$S$ denote the set of all the solutions to \eqref{system}, and let
$q(\mathcal{X})$ be the probability of at least one solution. Thus,
$q(\mathcal{X})=1-p(\mathcal{X})=\mathbf{Pr}(|S|>0)$. In our setting, the
inclusion-exclusion principle can be expressed as follows:

\begin{theorem}
\label{inclusion_exclusion}
\[
q(\mathcal{X})=\sum_{a}\mathbf{Pr}(a\in S)-\sum_{a,b}\mathbf{Pr}(a,b\in
S)+\sum_{a,b,c}\mathbf{Pr}(a,b,c\in S)-\ldots,
\]
where the first sum is over all $n$-bit strings $a$, the second sum is over
all pairs of different $n$-bit strings $a,b$, the third sum is over all
triplets of different $n$-bit strings $a,b,c$, etc. In particular,
\begin{equation}
\sum_{a}\mathbf{Pr}(a\in S)-\sum_{a,b}\mathbf{Pr}(a,b\in S)\leq q(\mathcal{X}%
)\leq\sum_{a}\mathbf{Pr}(a\in S).\label{inc}%
\end{equation}

\end{theorem}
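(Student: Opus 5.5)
We will apply the classical inclusion--exclusion principle to the events $A_a=\{a\in S\}$, one for each $a\in\mathbb{F}_{2}^{n}$. The event $|S|>0$ is exactly the union $\bigcup_{a}A_a$, so $q(\mathcal{X})=\mathbf{Pr}(\bigcup_a A_a)$. This is a finite union of $2^{n}$ events in the finite probability space of all choices of $f_1,\ldots,f_m$. The standard inclusion--exclusion formula gives
\[
\mathbf{Pr}\Bigl(\bigcup_a A_a\Bigr)=\sum_{r\geq1}(-1)^{r-1}\sum_{\{a_1,\ldots,a_r\}}\mathbf{Pr}(A_{a_1}\cap\cdots\cap A_{a_r}),
\]
where the inner sum runs over $r$-element sets of distinct strings. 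Since $A_{a_1}\cap\cdots\cap A_{a_r}=\{a_1,\ldots,a_r\in S\}$, this is the stated identity, with the sums read as over unordered collections of distinct strings. That is the natural reading of ``all pairs of different $n$-bit strings''.

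To make the argument self-contained, we will prove the identity directly via indicators. Write $|S|=\sum_a \mathbf{1}_{A_a}$. For any outcome with $|S|=s\geq1$, we have $\sum_{r\geq1}(-1)^{r-1}\binom{s}{r}=1-(1-1)^{s}=1$. For $s=0$ every term vanishes. Hence
\[
\mathbf{1}_{|S|>0}=\sum_{r\geq1}(-1)^{r-1}\binom{|S|}{r}=\sum_{r\geq1}(-1)^{r-1}\sum_{\{a_1,\ldots,a_r\}}\mathbf{1}_{a_1,\ldots,a_r\in S},
\]
because $\binom{|S|}{r}$ counts the $r$-subsets of $S$. Taking expectations and using linearity, where all sums are finite, yields the formula.

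For (\ref{inc}) we use the Bonferroni inequalities in the same indicator form. The upper bound is the union bound: $\mathbf{1}_{|S|>0}\leq|S|$, and taking expectations gives $q(\mathcal{X})\leq\sum_a\mathbf{Pr}(a\in S)$. For the lower bound it suffices to show $\mathbf{1}_{|S|>0}\geq s-\binom{s}{2}$ for every integer $s\geq0$. For $s=0$ and $s=1$ this holds with equality. For $s\geq2$ we have $s-\binom{s}{2}=s(3-s)/2\leq1$. Taking expectations gives $\sum_a\mathbf{Pr}(a\in S)-\sum_{\{a,b\}}\mathbf{Pr}(a,b\in S)\leq q(\mathcal{X})$.

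There is no real obstacle. The only care needed is the convention that the second sum is over unordered pairs: with ordered pairs it would double the correction term, and the lower bound would still hold but would be weaker. Randomness of the system enters only later, when these probabilities are evaluated. For example, $\mathbf{Pr}(a\in S)=2^{-m}$ by Proposition \ref{AA}, so $\sum_a\mathbf{Pr}(a\in S)=2^{n-m}$.
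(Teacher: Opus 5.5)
Your proof is correct and takes essentially the paper's approach. The paper gives no separate argument: it simply invokes the classical inclusion--exclusion principle, with its Bonferroni truncations, for the events $\{a\in S\}$, and your indicator computation just makes that step self-contained. Your reading of the second sum as running over unordered pairs of distinct strings is also the one the paper uses later, via the identity $\sum_{[a,b]}\mathbf{Pr}(a,b\in S)=2\sum_{a,b}\mathbf{Pr}(a,b\in S)+\sum_a\mathbf{Pr}(a\in S)$ in the proof of Theorem~\ref{inclusion_exclusion_1}.
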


\begin{theorem}
\label{inclusion_exclusion_1}For every hypergraph $\mathcal{X}=(X,\{X_{1}%
,X_{2},\ldots,X_{m}\}),$ it holds that
\[
\frac{3\cdot2^{n}}{2^{m+1}}-\frac{2^{2n}}{2^{2m+1}}\sum_{\{i_{1},..,i_{r}%
\}}2^{-|X_{i_{1}}\cup..\cup X_{i_{r}}|}\leq q(\mathcal{X})\leq\frac{2^{n}%
}{2^{m}}.
\]
where the sum runs over all subsets $\{i_{1},..,i_{r}\}\subseteq\{1,..,m\}$.
\end{theorem}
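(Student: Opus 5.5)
The plan is to evaluate both sums in (\ref{inc}) of Theorem \ref{inclusion_exclusion} exactly. The first sum is immediate. By Proposition \ref{AA}, $\mathbf{Pr}(a\in S)=2^{-m}$ for every $a$, so $\sum_a\mathbf{Pr}(a\in S)=2^n/2^m$. The right-hand inequality of (\ref{inc}) then gives the upper bound $q(\mathcal{X})\le 2^{n}/2^{m}$.

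For the lower bound I compute $\mathbf{Pr}(a,b\in S)$ for $a\neq b$, working equation by equation. The polynomials $f_i$ are independent, so the probability factorises over $i$. If $a$ and $b$ agree on $X_i$, the two events "$a$ is a root of $f_i$" and "$b$ is a root of $f_i$" coincide, so their joint probability is $1/2$. If $a$ and $b$ differ on $X_i$, then $f_i$ corresponds to a uniformly random subset of $\mathbb{F}_2^{|X_i|}$, and two distinct points lie in it independently, giving $1/4$. Put $c=a\oplus b$ and let $Z(c)=\{i : c|_{X_i}=0\}$. Then
\[
\mathbf{Pr}(a,b\in S)=2^{-2m}\,2^{|Z(c)|}.
\]

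The key step is to rewrite $2^{|Z(c)|}$ as the number of subsets $T\subseteq Z(c)$. A subset $T=\{i_1,\dots,i_r\}$ lies in $Z(c)$ exactly when $c$ vanishes on $X_{i_1}\cup\dots\cup X_{i_r}$. Swapping the order of summation, I get
\[
\sum_{c\in\mathbb{F}_2^n}2^{|Z(c)|}=\sum_{T}\#\{c : c|_{X_T}=0\}=\sum_T 2^{\,n-|X_{i_1}\cup\dots\cup X_{i_r}|},
\]
where $X_T$ denotes the union and $T$ runs over all subsets, the empty set included. This sum includes $c=0$, which must be removed: there $Z(0)=\{1,\dots,m\}$, contributing $2^{m}$.

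It remains to count pairs. Unordered pairs $\{a,b\}$ with $a\neq b$ correspond to choosing $a$ ($2^n$ ways) and $c\neq0$, then dividing by $2$. Hence
\[
\sum_{a,b}\mathbf{Pr}(a,b\in S)=\frac{2^{n}}{2}\,2^{-2m}\Bigl(2^n\sum_T 2^{-|X_T|}-2^m\Bigr)=\frac{2^{2n}}{2^{2m+1}}\sum_T2^{-|X_T|}-\frac{2^n}{2^{m+1}}.
\]
Substituting into the left inequality of (\ref{inc}) gives
\[
q(\mathcal{X})\ \ge\ \frac{2^n}{2^m}+\frac{2^n}{2^{m+1}}-\frac{2^{2n}}{2^{2m+1}}\sum_T 2^{-|X_T|},
\]
which is exactly the claimed bound $\frac{3\cdot 2^n}{2^{m+1}}-\cdots$.

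The only delicate point is the bookkeeping: the empty $T$ must be included, the sum in Theorem \ref{inclusion_exclusion} must be read as over unordered pairs, and the $c=0$ term must be subtracted. These are what produce the constant $3/2$.
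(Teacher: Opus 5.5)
Your proof is correct. Its skeleton matches the paper's: both use the Bonferroni bounds (\ref{inc}), take the first sum from Proposition \ref{AA}, and evaluate the pair sum exactly by classifying pairs according to where they agree. Your $Z(c)$ is the paper's set of edges $X_i\subseteq T$ with $T$ the zero set of $c$, so $|Z(c)|=m_T$.

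The real difference is how you obtain the key identity $\sum_{T}2^{m_T}=\sum_{I}2^{n-|X_I|}$. The paper counts, for each family $\{j_1,\dots,j_s\}$, the number $C_{j_1,\dots,j_s}$ of sets $T$ containing exactly those edges by inclusion--exclusion. It then collapses the resulting alternating double sum using $\sum_{s}2^s\binom{r}{s}(-1)^{r-s}=1$. You instead write $2^{|Z(c)|}$ as the number of subsets of $Z(c)$ and swap the order of summation. This gives the identity in one line with no signs to track; the paper's computation is this same double count routed through a M\"obius inversion and back.

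The bookkeeping also differs slightly. The paper sums over ordered pairs including $a=b$ and converts via $\sum_{[a,b]}=2\sum_{a,b}+\sum_a$. You subtract the $c=0$ term and halve directly. The two are equivalent and both produce the constant $3/2$. Your route is shorter and makes the lemma self-evident; the paper's detour through $C_{j_1,\dots,j_s}$ gains nothing extra for this statement.
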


\textbf{Remark 1. }The theorem implies that $p(\mathcal{X)}\geq1-\frac{2^{n}%
}{2^{m}},$ while Proposition \ref{BB} provides the bound $p(\mathcal{X}%
)\geq1-\prod_{i=1}^{m}(1-2^{-2^{|X_{i}|}}\dot{)}.$ For $m>\frac{n}{\log
_{2}(3/2)} $ we have
\[
2^{n}<\prod_{i=1}^{m}2(1-2^{-2})<\prod_{i=1}^{m}2(1-2^{-2^{|X_{i}|}}).
\]
So, $1-\frac{2^{n}}{2^{m}}>1-\prod_{i=1}^{m}(1-2^{-2^{|X_{i}|}}\dot{)}$ and
$p(\mathcal{X})\rightarrow1$ at a faster rate than one might deduce from
Proposition \ref{BB}.

\textbf{Remark 2.} In the general case, it is hard to estimate the sum
$\sum_{\{i_{1},..,i_{r}\}}2^{-|X_{i_{1}}\cup..\cup X_{i_{r}}|}$ in terms of
$n$ and $m$, and therefore it is difficult to assess how close the lower and
upper bounds on $q(\mathcal{X})$ are.

\begin{proof}
\textbf{of Theorem \ref{inclusion_exclusion_1}. } By Proposition \ref{AA},
$\mathbf{Pr}(a\in S)=\frac{1}{2^{m}}$ and therefore $\sum_{a}\mathbf{Pr}(a\in
S)=\frac{2^{n}}{2^{m}}$, where the sum is over all $n$-bit strings $a$.

We now find an expression for $\sum_{[a,b]}\mathbf{Pr}(a,b\in S)$, where
$[a,b]$ denotes the ordered pair $a,b$ including $a=b$. The value of
$\sum_{a,b}\mathbf{Pr}(a,b\in S)$ follows from the identity $\sum
_{[a,b]}\mathbf{Pr}(a,b\in S)=2\sum_{a,b}\mathbf{Pr}(a,b\in S)+\sum
_{a}\mathbf{Pr}(a\in S)$, so the theorem follows from the lemma below.

\begin{lemma}
\label{inclusion_exclusion_second_term}
\[
\sum_{[a,b]}\mathbf{Pr}(a,b\in S)=\frac{2^{2n}}{2^{2m}}\sum_{\{i_{1}%
,..,i_{r}\}}2^{-|X_{i_{1}}\cup..\cup X_{i_{r}}|},
\]
where the sum is over all subsets $\{i_{1},..,i_{r}\}\subseteq\{1,..,m\} $.
\end{lemma}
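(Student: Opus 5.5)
We compute $\mathbf{Pr}(a,b\in S)$ for each ordered pair $[a,b]$ by grouping pairs according to the set of coordinates where they agree, and then use independence across equations. Fix $[a,b]$ and let $D=\{x\in X: a_x\neq b_x\}$. For a single equation $f_i(X_i)=0$ there are two cases.
\begin{itemize}
\item If $X_i\cap D=\emptyset$, the projections of $a$ and $b$ to $X_i$ coincide. By Proposition \ref{AA}, both are roots with probability $1/2$.
\item If $X_i\cap D\neq\emptyset$, the projections are two distinct points of $\mathbb{F}_2^{|X_i|}$. A uniformly random subset of $\mathbb{F}_2^{|X_i|}$ contains both with probability $1/4$.
\end{itemize}
Since the $f_i$ are independent,
\[
\mathbf{Pr}(a,b\in S)=2^{-2m}\,2^{|\{i:X_i\cap D=\emptyset\}|}=2^{-2m}\prod_{i=1}^m\bigl(1+[X_i\cap D=\emptyset]\bigr).
\]

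Next, expand the product as a sum over subsets of equations:
\[
\prod_{i=1}^m\bigl(1+[X_i\cap D=\emptyset]\bigr)=\sum_{I\subseteq\{1,\dots,m\}}\ \prod_{i\in I}[X_i\cap D=\emptyset]=\sum_{I}\bigl[D\cap X_I=\emptyset\bigr],
\]
where $X_I=\bigcup_{i\in I}X_i$ and the empty set $I$ contributes $1$. Summing over all ordered pairs and interchanging the sums gives
\[
\sum_{[a,b]}\mathbf{Pr}(a,b\in S)=2^{-2m}\sum_I\#\{[a,b]: a,b \text{ agree on } X_I\}.
\]
The count of such pairs is $2^n\cdot 2^{n-|X_I|}$: choose $a$ freely, then $b$ is determined on $X_I$ and free elsewhere. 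This is exactly $\frac{2^{2n}}{2^{2m}}\sum_I 2^{-|X_I|}$, which is the claim.

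The only step needing care is the single-equation probability $1/4$ for distinct projected points. This amounts to noting that membership of distinct points in a uniformly random root set is independent, each with probability $1/2$, via the correspondence in the proof of Proposition \ref{AA}. We must also follow the paper's convention that the sum includes the empty subset (contributing $2^0=1$) and that $[a,b]$ includes $a=b$, i.e.\ $D=\emptyset$, which the computation above already covers uniformly.
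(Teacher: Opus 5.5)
Your proof is correct. Its first step matches the paper's: grouping ordered pairs by the set $T=X\setminus D$ on which $a$ and $b$ agree gives $\mathbf{Pr}(a,b\in S)=2^{-m_T}4^{-(m-m_T)}$, with $m_T=\#\{i:X_i\subseteq T\}$. Since each $T$ is the exact agreement set of $2^n$ ordered pairs, the lemma reduces to evaluating $\sum_T 2^{m_T}$.

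Here the routes split. The paper classifies the sets $T$ by exactly which edges $X_{j_1},\dots,X_{j_s}$ they contain. It counts each class $C_{j_1,\dots,j_s}$ by inclusion--exclusion, as an alternating sum of terms $2^{n-|X_{j_1}\cup\dots\cup X_{j_s}\cup X_{i_1}\cup\dots\cup X_{i_t}|}$. It then regroups by the combined index set $\{i_1,\dots,i_r\}$, and the signs collapse through $\sum_{s=0}^{r}\binom{r}{s}2^s(-1)^{r-s}=(2-1)^r=1$.

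You instead expand $2^{m_T}=\prod_i\bigl(1+[X_i\subseteq T]\bigr)=\#\{I: X_I\subseteq T\}$ and interchange the sums. Each $I$ then contributes the number of pairs agreeing on $X_I$, namely $2^{2n-|X_I|}$.

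The paper's binomial identity exactly undoes its inclusion--exclusion. Your double counting therefore reaches the same formula with no signed sums, and without counting sets $T$ that contain \emph{exactly} a prescribed family of edges. The paper's finer classification carries information this lemma does not need.
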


Let $T$ be a set of variables, a subset of $X$ and let $m_{T}$ denote the
number of $X_{i}\subseteq T$. Then
\begin{align*}
\sum_{\lbrack a,b]}\mathbf{Pr}(a,b  & \in S)=\sum_{T}\sum_{a=b(T)}%
\mathbf{Pr}(a,b\in S)\\
& =\sum_{T}\frac{2^{n}}{2^{m_{T}}\,4^{m-m_{T}}}=\frac{2^{n}}{4^{m}}\sum
_{T}2^{m_{T}},
\end{align*}
where $a=b(T)$ means those $a,b$ which coincide exactly on $T$. The second
equality is true because $\mathbf{Pr}(a,b\in S)=\frac{1}{2^{m_{T}}%
\,4^{m-m_{T}}}$ if $a=b(T)$. Now
\[
\sum_{T}2^{m_{T}}=\sum_{s=0}^{m}2^{s}\,\sum_{\{j_{1},..,j_{s}\}}%
C_{j_{1},..,j_{s}},
\]
where $C_{j_{1},..,j_{s}}$ is the number of subsets $T$ which contain exactly
$s$ variable sets $X_{j_{1}},..,X_{j_{s}}$. By inclusion-exclusion principle,
\[
C_{j_{1},..,j_{s}}=\sum_{\{i_{1},..,i_{t}\}}(-1)^{t}2^{n-|X_{j_{1}}\cup..\cup
X_{j_{s}}\cup X_{i_{1}}\cup..\cup X_{i_{t}}|},
\]
where the sum is over all subsets $\{i_{1},..,i_{t}\}\subseteq
\{1,..,m\}\setminus\{j_{1},..,j_{s}\}$. We denote $\{i_{1},..,i_{t}%
,j_{1},..,j_{s}\}=\{i_{1},..,i_{r}\}$ and get
\[
\sum_{T}2^{m_{T}}=\sum_{\{i_{1},..,i_{r}\}}2^{n-|X_{i_{1}}\cup..\cup X_{i_{r}%
}|}\sum_{s=0}^{r}2^{s}{\binom{r}{{s}}}(-1)^{r-s}=\sum_{\{i_{1},..,i_{r}%
\}}2^{n-|X_{i_{1}}\cup..\cup X_{i_{r}}|}.
\]
Therefore,
\[
\sum_{\lbrack a,b]}\mathbf{Pr}(a,b\in S)=\frac{2^{2n}}{2^{2m}}\sum
_{\{i_{1},..,i_{r}\}}2^{-|X_{i_{1}}\cup..\cup X_{i_{r}}|}.
\]
where the sum is over all subsets $\{i_{1},..,i_{r}\}\subseteq\{1,..,m\}$.
\ The lemma is proved.

We have
\begin{align}
& \sum_{a}\mathbf{Pr}(a\in S)-\sum_{a,b}\mathbf{Pr}(a,b\in S)\nonumber\\
& =\sum_{a}\mathbf{Pr}(a\in S)-\frac{1}{2}(\sum_{[a,b]}\mathbf{Pr}(a,b\in
S)-\sum_{a}\mathbf{Pr}(a\in S))\nonumber\\
& =\frac{3}{2}\sum_{a}\mathbf{Pr}(a\in S)-\frac{1}{2}\sum_{[a,b]}%
\mathbf{Pr}(a,b\in S).\nonumber
\end{align}
Combining \eqref{inc} and Lemma \ref{inclusion_exclusion_second_term} yields
\[
\frac{3\cdot2^{n}}{2^{m+1}}-\frac{2^{2n}}{2^{2m+1}}\sum_{i_{1},..,i_{r}%
}2^{-|X_{i_{1}}\cup..\cup X_{i_{r}}|}\leq q(\mathcal{X})\leq\frac{2^{n}}%
{2^{m}}.
\]
That completes the proof of Theorem \ref{inclusion_exclusion_1}.
\end{proof}

\section{Inconsistency Probability for a Complete $k$-hypergraph}

Let $\mathcal{X}_{k}=(X,\{X_{1},X_{2},\ldots,X_{m}\})$ be the complete
$k$-uniform hypergraph, where $m={\binom{n}{{k}}}$ and $X_{i}$ are all
different $k$-subsets of $X$. In this case, a tight asymptotic bound holds.

\begin{theorem}
\label{X_0} Let $k\geq2$. Then $p(\mathcal{X}_{k})=1-\frac{2^{n}}%
{2^{{\binom{n}{k}}}}+O\left(  \frac{n2^{n}}{2^{{\binom{n}{k}}+{\binom
{n-1}{k-1}}}}\right)  $.
\end{theorem}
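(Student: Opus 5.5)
The plan is to sandwich $q(\mathcal{X}_k)=1-p(\mathcal{X}_k)$ between the two bounds in (\ref{inc}) of Theorem \ref{inclusion_exclusion}. Write $m=\binom{n}{k}$. The upper bound $q\le 2^n/2^m$ is immediate from Proposition \ref{AA}. It then suffices to show
\[
\sum_{a,b}\mathbf{Pr}(a,b\in S)=O\!\left(\frac{n2^{n}}{2^{m+\binom{n-1}{k-1}}}\right),
\]
because then $q=2^n/2^m - O(\cdot)$, which gives the claimed formula for $p$.

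For the pair sum I will not use the subset-union expression of Lemma \ref{inclusion_exclusion_second_term}. That sum runs over $2^m$ terms and is awkward to control. Instead I will use the intermediate identity from its proof, organised by the agreement set $T$. For distinct $a,b$ that agree exactly on a proper subset $T\subsetneq X$, we have $\mathbf{Pr}(a,b\in S)=2^{-m_T}4^{-(m-m_T)}=2^{-2m+m_T}$. Here $m_T=\binom{|T|}{k}$ for the complete hypergraph. Each $T$ with $|T|=t$ contributes $2^n$ ordered pairs, so
\[
\sum_{[a,b],\,a\ne b}\mathbf{Pr}(a,b\in S)=\frac{2^n}{4^m}\sum_{t=0}^{n-1}\binom{n}{t}2^{\binom{t}{k}}.
\]
The unordered sum is half of this.

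The key step is to show that the $t=n-1$ term dominates. That term equals $n\,2^{\binom{n-1}{k}}$, and $\binom{n-1}{k}=m-\binom{n-1}{k-1}$. So it contributes $\frac{2^n}{4^m}\,n\,2^{m-\binom{n-1}{k-1}}=\frac{n2^n}{2^{m+\binom{n-1}{k-1}}}$, which is exactly the error term in the statement. It remains to bound the terms with $t\le n-2$ by a constant (indeed $o(1)$) times the $t=n-1$ term. Consider the ratio of consecutive terms going downward. The $t$ term over the $t+1$ term is $\frac{\binom{n}{t}}{\binom{n}{t+1}}2^{-\binom{t}{k-1}}$. For $t\ge k-1$ this is at most $\frac{t+1}{n-t}2^{-\binom{t}{k-1}}$, and it is tiny when $t$ is large. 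In general I would bound the tail directly. The sum over $t\le n-2$ is at most $2^n\cdot 2^{\binom{n-2}{k}}$, and $\binom{n-1}{k}-\binom{n-2}{k}=\binom{n-2}{k-1}\ge n-2$ for $k\ge2$. So the tail is at most $2^{n}\,2^{\binom{n-1}{k}-(n-2)}=4\cdot 2^{\binom{n-1}{k}}$, which is $O(n2^{\binom{n-1}{k}})$.

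This crude tail estimate is the step I expect to be the main obstacle: it is where $k\ge2$ is used. I would double-check it for $k=2$, where $\binom{n-2}{1}=n-2$ exactly cancels the $2^n$ count of subsets. It still gives a bounded multiple of the main term, which is all the $O(\cdot)$ requires. Assembling the pieces, $q(\mathcal{X}_k)=2^n/2^m-O\bigl(n2^n/2^{m+\binom{n-1}{k-1}}\bigr)$, and hence $p(\mathcal{X}_k)=1-2^n/2^{\binom{n}{k}}+O\bigl(n2^n/2^{\binom{n}{k}+\binom{n-1}{k-1}}\bigr)$.
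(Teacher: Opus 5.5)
Your proposal is correct and follows essentially the paper's proof: you sandwich $q(\mathcal{X}_k)$ via (\ref{inc}) and group distinct pairs by their agreement set, which is equivalent to the paper's classes $C_t$. This gives the pair sum $\frac{2^n}{2^{1+2\binom{n}{k}}}\sum_{t=0}^{n-1}\binom{n}{t}2^{\binom{t}{k}}$, whose $t=n-1$ term yields exactly the stated error. The one difference is in your favour: the paper only asserts $\sum_{t=0}^{n-1}\binom{n}{t}2^{\binom{t}{k}}=O(n2^{\binom{n-1}{k}})$, while your tail bound $\sum_{t\le n-2}\binom{n}{t}2^{\binom{t}{k}}\le 2^{n}2^{\binom{n-2}{k}}\le 4\cdot 2^{\binom{n-1}{k}}$ actually proves it. That bound is valid once $n\ge k+2$, so that $\binom{n-2}{k-1}\ge n-2$.
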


\begin{proof}
The proof of this theorem is also based on the inequality (\ref{inc}). The
total number of edges (i.e. equations) in $\mathcal{X}_{k}$ is ${\binom{n}%
{{k}}.}$ By Proposition \ref{AA}, we have $\mathbf{Pr}(a\in S)=\frac
{1}{2^{{\binom{n}{k}}}}$ and so
\begin{equation}
\sum_{a}\mathbf{Pr}(a\in S)=\frac{2^{n}}{2^{{\binom{n}{k}}}}.\label{dd}%
\end{equation}
In order to compute $\sum_{a,b}\mathbf{Pr}(a,b\in S)$ we partition the set of
all unordered pairs of distinct $n$-bit strings $a,b$ into $n$ classes. A pair
$\{a,b\}$ belongs to the class $C_{t}$ if $a$ and $b$ agree in exactly $0\leq
t\leq n-1$ positions. The size of each class is $|C_{t}|={\binom{n}{{t}}%
}2^{n-1}.$

Let $\{a,b\}\in C_{t}$. \ Then the pair $\{a,b\}$ has ${\binom{t}{{k}}}$
projections of size $1$ and ${\binom{n}{{k}}}-{\binom{t}{{k}}}$ projections of
size $2$ onto the variable sets $X_{i}$. Therefore,
\[
\mathbf{Pr}(a,b\in S)=\frac{1}{2^{{\binom{t}{{k}}}+2({\binom{n}{{k}}}%
-{\binom{t}{{k}}})}}=\frac{1}{2^{-{\binom{t}{{k}}}+2{\binom{n}{{k}}}}}.
\]
Then
\begin{align}
\sum_{a,b}\mathbf{Pr}(a,b\in S)  & =\sum_{t=0}^{n-1}\,\sum_{\{a,b\}\in C_{t}%
}\frac{1}{2^{-{\binom{t}{{k}}}+2{\binom{n}{{k}}}}}\nonumber\\
& =\sum_{t=0}^{n-1}\,\frac{{\binom{n}{{t}}}\,2^{n-1}}{2^{-{\binom{t}{{k}}%
}+2{\binom{n}{{k}}}}}=\frac{2^{n}}{2^{1+2{\binom{n}{{k}}}}}\sum_{t=0}%
^{n-1}{\binom{n}{{t}}}\,2^{{\binom{t}{{k}}}}\nonumber\\
& =O\left(  \frac{n2^{n}}{2^{{\binom{n}{{k}}}+{\binom{n-1}{{k-1}}}}}\right)
.\nonumber
\end{align}
The last estimate follows from $\sum_{t=0}^{n-1}{\binom{n}{{t}}}%
\,2^{{\binom{t}{{k}}}}=O(n2^{\binom{n-1}{{k}}})$ as $k\geq2$ is fixed and $n$
tends to infinity. To complete the proof of the theorem it now suffices to
substitute the bound on $\sum_{a,b}\mathbf{Pr}(a,b\in S)$ and the value of
$\sum_{a}\mathbf{Pr}(a\in S)$ provided in (\ref{dd}) into the expression in
(\ref{inc}).
\end{proof}

\section{\bigskip$2$-sparse system of equations}

In this section we consider a randomly generated $2$-sparse equation system
\eqref{system} over the variable set $X=\{x_{1},...,x_{n}\}$. In what follows,
the system will be denoted by $\mathcal{S}$. The associated hypergraph is
$2$-uniform and hence corresponds to a simple graph $G=G(V,E)$ with $n$
vertices $V$ and $m$ edges $E$. The vertices $V=\{v_{1},...,v_{n}\}$
correspond to the variables $X$, and each edge $v_{i}v_{j}$ represents an
equation involving the live variables $x_{i}$ and $x_{j}.$

In this section we will study the probability $q(G)$ that a randomly generated
system is consistent for some particular graphs $G$.

There are $2^{4}$ polynomial functions in two variables over $GF(2)$, so the
graph $G$ can represent $2^{4m}$ distinct systems of equations. Thus,
$q(G)=\frac{N}{2^{4m}},$ where $N$ is the number of solvable systems
corresponding to $G.$

We characterize the trees with extremal consistency probabilities and the
forests that maximize consistency probability. In addition, we provide
explicit formulas for the consistency probabilities of a path and a star.
Finally, we conjecture an explicit formula for the consistency probability of
a cycle and present supporting evidence for the conjecture.

\subsection{Preliminaries}

We begin by introducing some notation and stating preliminary results. Let $G
$ be a graph associated with a system of equations $\mathcal{S}$. For any
vertex $v$ in $G $ we set $v^{0},v^{1},$ and $v^{01}$ to be the probability
that the variable $x_{v}$ equals to $0$ in all solutions, the variable $x_{v}
$ equals to $1$ in all solutions, and there is a solution with $x_{v}=0 $ and
another solution of $\mathcal{S}$ with $x_{v}=1$, respectively. Clearly,
$q(G)=v^{0}+v^{1}+v^{01}.$

\begin{lemma}
\label{L1}Let $G$ be a graph. Then for each vertex $v$ in $G,$
\[
v^{0}=v^{1}
\]
and thus%
\[
q(G)=2v^{0}+v^{01}.
\]

\end{lemma}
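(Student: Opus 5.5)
The proof will be a symmetry (bijection) argument on the set of all $2^{4m}$ systems represented by $G$. Fix the vertex $v$ with variable $x_v$. Define a map $\phi$ on systems: every polynomial $f_i$ whose edge contains $v$ is replaced by the polynomial obtained by substituting $x_v+1$ for $x_v$. Every other polynomial is left unchanged. Explicitly, if $X_i=\{x_v,x_u\}$ then $\phi(f_i)(x_v,x_u)=f_i(x_v+1,x_u)$.

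Substitution is well defined on Boolean functions of two variables, and applying it twice gives back the original. Hence $\phi$ is an involution, and in particular a bijection, on the $16^{\deg v}$ choices of polynomials on the edges at $v$. It is therefore also a bijection on the set of all systems with underlying graph $G$. Since the polynomials are chosen uniformly and independently, $\phi$ preserves the probability measure.

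The key step is to check how $\phi$ acts on solutions. For an assignment $a\in\mathbb{F}_2^n$, let $a'$ be $a$ with the coordinate $x_v$ flipped. Then $a$ solves the system $\mathcal{S}$ if and only if $a'$ solves $\phi(\mathcal{S})$. This holds equation by equation: on edges not containing $v$ neither the equation nor the relevant coordinates change, and on edges containing $v$ we have $\phi(f_i)(a')=f_i(a)$.

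Thus $a\mapsto a'$ is a bijection between the solution sets of $\mathcal{S}$ and $\phi(\mathcal{S})$, and it swaps the value of $x_v$. Consequently, $x_v=0$ in all solutions of $\mathcal{S}$ (with at least one solution) exactly when $x_v=1$ in all solutions of $\phi(\mathcal{S})$. So $\phi$ maps the event counted by $v^{0}$ bijectively onto the event counted by $v^{1}$, and since $\phi$ is measure-preserving, $v^{0}=v^{1}$. Substituting this into $q(G)=v^0+v^1+v^{01}$ gives $q(G)=2v^0+v^{01}$.

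There is no real obstacle. The only point needing care is the definition of $v^0$: it should be read as "the system is consistent and every solution has $x_v=0$", so that the three events are disjoint and the identity $q(G)=v^0+v^1+v^{01}$ holds, as the paper implicitly uses.
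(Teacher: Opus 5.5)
Your proof is correct and is essentially the paper's argument. The paper likewise defines a map $h_v$ that shifts each polynomial's root set by flipping the coordinate $x_v$, applies it to every equation, and observes that this exchanges ``all solutions have $x_v=0$'' with ``all solutions have $x_v=1$''. You make explicit two points the paper leaves implicit: the map is an involution and so preserves the uniform measure, and $v^{0}$ must be read as including consistency.
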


\begin{proof}
\textbf{\ }For the sake of simplicity, in what follows we assume wlog that the
vertex $v$ corresponds to the variable $x_{1}.\,\ $To prove that $v^{0}%
=v^{1},$ we consider the mapping $h_{v}:\mathcal{F\rightarrow F}%
,\mathcal{\,\ }$where $\mathcal{F}$ is the set of all polynomials over $GF(2)$
in $n$ variables, such that $h_{v}(f)=f_{v},$ where $(c_{1}+1,c_{2}%
,...,c_{n})$ is a root of $f_{v}$ iff $(c_{1},c_{2},...,c_{n})$ is a root of
$f$ ($c_{1}+1$ is carried out in $GF(2)$). Obviously, $h_{v}$ is well defined.

Now we consider a mapping $H_{v}$ induced by $h_{v}.$ For a solvable system
$\mathcal{S}$ corresponding to $G$ we set $H_{v}(\mathcal{S)=S}_{v}$, where
$\mathcal{S}_{v}=\{f_{v},f\in\mathcal{S\}}$. Clearly, $\mathcal{S}_{v}$ is
solvable iff $\mathcal{S}$ is, and if for all solutions to $\mathcal{S}$ it is
$x_{v}=0$, then for all solutions to $\mathcal{S}_{v}$ we have $x_{v}=1.$ This
establishes that $v^{0}=v^{1}$.
\end{proof}

A well-known result, used repeatedly in what follows, states that

\begin{claim}
\label{Claim}Let $f(x,y)=0$ \ be a randomly selected equation on two
variables. Then $v_{x}^{0}=\frac{3}{16},v_{x}^{01}=\frac{9}{16}.$
\end{claim}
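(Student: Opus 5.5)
The plan is to count directly, using the correspondence from the proof of Proposition \ref{AA}. A polynomial $f(x,y)$ over $\mathbb{F}_{2}$ is determined by its set of roots $R\subseteq\mathbb{F}_{2}^{2}$. Since $f$ is uniform over the $2^{4}=16$ polynomials, $R$ is uniform over the $16$ subsets of $\mathbb{F}_{2}^{2}$. So each of $v_{x}^{0}$ and $v_{x}^{01}$ is simply (number of qualifying subsets)$/16$.

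Split $\mathbb{F}_{2}^{2}$ into the two halves $A_{0}=\{(0,0),(0,1)\}$ and $A_{1}=\{(1,0),(1,1)\}$, according to the value of $x$. Write $R=R_{0}\cup R_{1}$ with $R_{i}=R\cap A_{i}$. Each $R_{i}$ ranges independently over the $4$ subsets of a $2$-element set.

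For $v_{x}^{0}$, we need the equation to be solvable and every solution to have $x=0$. This means $R_{0}\neq\emptyset$ and $R_{1}=\emptyset$. There are $3\cdot1=3$ such subsets, so $v_{x}^{0}=3/16$.

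For $v_{x}^{01}$, we need a solution with $x=0$ and a solution with $x=1$. This means $R_{0}\neq\emptyset$ and $R_{1}\neq\emptyset$. There are $3\cdot3=9$ such subsets, so $v_{x}^{01}=9/16$.

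As a consistency check, the case $R=\emptyset$ (probability $1/16$, matching Proposition \ref{AA}) together with $v_{x}^{0}+v_{x}^{1}+v_{x}^{01}$ accounts for all $16$ subsets: $1+3+3+9=16$. This also agrees with Lemma \ref{L1}, which gives $v_{x}^{1}=v_{x}^{0}$. There is no real obstacle here. The only point to state carefully is the uniformity of $R$, which was already established in the proof of Proposition \ref{AA}.
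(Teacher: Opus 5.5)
Your count is correct. With $R$ uniform over the $16$ subsets of $\mathbb{F}_{2}^{2}$, the two events are exactly $\{R_0\neq\emptyset,\,R_1=\emptyset\}$ and $\{R_0\neq\emptyset,\,R_1\neq\emptyset\}$, which gives $3/16$ and $9/16$.

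The paper gives no proof of this claim; it simply calls it ``well-known.'' Your argument via the root-set correspondence of Proposition~\ref{AA} therefore supplies the missing justification. It is also the same row-by-row splitting of $\mathbb{F}_{2}^{2}$ that underlies the $4/16$, $3/16$ and $9/16$ counts in the proof of Lemma~\ref{L4}.
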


Let $G=G(V,E)$ be a graph such that an edge $vw\notin E$. \ Then by $G+vw$ we
denote a graph that results from $G$ by adding this edge.

\begin{lemma}
\label{L4}Let $v$ be a vertex of a graph $G,$ and $w$ be a vertex not in $G.$
Then,%
\begin{align*}
\text{(a) }w_{G+vw}^{0}  & =\frac{1}{2}v_{G}^{0}+\frac{3}{16}v_{G}^{01},\text{
and }w_{G+vw}^{01}=\frac{1}{2}v_{G}^{0}+\frac{9}{16}v_{G}^{01};\\
\text{(b) }v_{G+vw}^{0}  & =\frac{3}{4}v_{G}^{0}+\frac{3}{16}v_{G}^{01},\text{
and }v_{G+vw}^{01}=\frac{9}{16}v^{01}_{G};
\end{align*}
and%
\[
\text{(c) }q(G+vw)=\frac{3}{4}q(G)+\frac{3}{16}v_{G}^{01}.
\]

\end{lemma}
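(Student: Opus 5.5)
The key observation is that the new equation $f(x_v,x_w)=0$ for the edge $vw$ is independent of the system $\mathcal{S}$ of $G$, and $w$ appears nowhere else. So the solution set of $G+vw$ is determined by two objects. The first is the set $A\subseteq\{0,1\}$ of values that $x_v$ takes over solutions of $\mathcal{S}$. The second is the relation $R=\{(c,d): f(c,d)=0\}$, which is a uniformly random subset of $\mathbb{F}_2^2$ independent of $A$. The system $\mathcal{S}+f$ has a solution with $x_v=c$, $x_w=d$ iff $c\in A$ and $(c,d)\in R$. By Lemma \ref{L1} applied to $G$, $A=\{0\}$ and $A=\{1\}$ each occur with probability $v_G^0$, and $A=\{0,1\}$ with probability $v_G^{01}$. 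I will condition on $A$ and compute everything from the four independent fair bits $R(c,d)$.

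\textbf{Case $A=\{0,1\}$.} Here the projection of solutions onto $(x_v,x_w)$ is exactly $R$, so this case is precisely a single random equation, and Claim \ref{Claim} gives both the $w$-probabilities and the $v$-probabilities: $3/16$ for ``only $0$'' and $9/16$ for ``both''.

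\textbf{Case $A=\{0\}$.} Only the two bits $R(0,0)$ and $R(0,1)$ matter. For $w$, the event ``only $0$'' has probability $1/4$ (namely $R(0,0)=1$, $R(0,1)=0$), the event ``only $1$'' also has probability $1/4$, and ``both'' has probability $1/4$. For $v$, the system stays consistent with $x_v=0$ forced with probability $3/4$, and $v$ can never take both values. The case $A=\{1\}$ is symmetric: it gives the same $w$-statistics and contributes to $v^1$ instead of $v^0$.

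\textbf{Assembling.} Combining the cases by total probability gives
\[
w^0_{G+vw}=2\cdot\tfrac14 v_G^0+\tfrac{3}{16}v_G^{01},\qquad
w^{01}_{G+vw}=2\cdot\tfrac14 v_G^0+\tfrac{9}{16}v_G^{01},
\]
\[
v^0_{G+vw}=\tfrac34 v_G^0+\tfrac{3}{16}v_G^{01},\qquad
v^{01}_{G+vw}=\tfrac{9}{16}v_G^{01}.
\]
These are (a) and (b). For (c), Lemma \ref{L1} gives $q(G+vw)=2v^0_{G+vw}+v^{01}_{G+vw}=\tfrac32 v_G^0+\tfrac{15}{16}v_G^{01}$. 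Using $q(G)=2v_G^0+v_G^{01}$, this equals $\tfrac34 q(G)+\tfrac{3}{16}v_G^{01}$.

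The only delicate point is justifying the conditioning. I need the event structure of $A$ and the independence of $R$ from $\mathcal{S}$, which holds because the equations are chosen independently. I also need that every extension to $w$ is realized exactly when $(c,d)\in R$, which holds because $w$ is a fresh variable appearing only in $f$. Once that is set up, the rest is bookkeeping.
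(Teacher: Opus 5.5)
Your proof is correct and takes essentially the same approach as the paper: condition on whether $x_v$ is forced to $0$, forced to $1$, or free over the solutions of $G$, and count over the $16$ equations on the new edge $vw$. The only difference is the order of derivation: the paper gets (c) from (a) via Lemma~\ref{L1} at $w$ and then backs out $v^0_{G+vw}$ from (c), whereas you compute $v^0_{G+vw}$ directly (the factor $\frac34$ when $x_v$ is forced) and then obtain (c) from (b).
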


\begin{proof}
The set $\mathcal{F}_{vw}$ of all Boolean functions in two variables
$x_{v},x_{w}$ consists of 16 functions. For each solvable system $\mathcal{S}$
corresponding to $G$ we consider the system $\mathcal{S}\cup\{f=0\},$ where
$f\in\mathcal{F}_{vw}.$ Each system in $G+vw$ is thus constructed by adding
one such equation to a system in $G.$ If $\mathcal{S}$ is solvable system in
which $x_{v}=0$ in all solutions, then there are exactly $4$ of the $16$
polynomials $f\in\mathcal{F}_{vw},$ the extended system forces $x_{w}=0$ in
all solutions. The same is true when $x_{v}=1$ in all solutions. If
$\mathcal{S}$ allows both values for $x_{v}$ (i.e., is counted in $v_{G}^{01})
$, then for exactly $3$ out of $16$ functions $f$ the resulting system forces
$x_{w}=0$ in all solutions. Thus, in aggregate,
\[
w_{G+vw}^{0}=\frac{1}{4}v_{G}^{0}+\frac{1}{4}v_{G}^{1}+\frac{3}{16}v_{G}%
^{01}=\frac{1}{2}v_{G}^{0}+\frac{3}{16}v_{G}^{01}.
\]
Similarly,
\[
w_{G+vw}^{01}=\frac{1}{4}v_{G}^{0}+\frac{1}{4}v_{G}^{1}+\frac{9}{16}v_{G}%
^{01}=\frac{1}{2}v_{G}^{0}+\frac{9}{16}v_{G}^{01}.
\]
This proves part (a). As to part (c),
\begin{align*}
q(G+vw)  & =2w^{0}+w^{01}=^{\text{by (a)}}\frac{1}{16}(24v_{G}^{0}%
+15v_{G}^{01})=\\
\frac{1}{16}(12(2v_{G}^{0}+v_{G}^{01})+3v_{G}^{01})  & =^{\text{by Lemma
\ref{L1}}}\frac{3}{4}q(G)+\frac{3}{16}v_{G}^{01}.
\end{align*}
For part (b), exactly $9$ functions $f$ preserve the value of $x_{v}$ in the
extended system; that is, allow both $x_{v}=0$ and $x_{v}=1$ to appear in some
solutions of $\mathcal{S}\cup\{f=0\}$ if that was the case in $\mathcal{S}.$
Therefore, $v_{G+vw}^{01}=\frac{9}{16}v_{G}^{01}.$ Using the identity
\[
q(G+vw)=2v_{G+vw}^{0}+v_{G+vw}^{01}
\]
and solving for $v_{G+vw}^{0}$ yields
\begin{align*}
2v_{G+vw}^{0}  & =q(G+vw)-v_{G+vw}^{01}=^{\text{by (c)}}(\frac{3}{4}%
q(G)+\frac{3}{16}v_{G}^{01})-\frac{9}{16}v_{G}^{01}\\
& =\frac{3}{4}q(G)-\frac{3}{8}v_{G}^{01}.
\end{align*}
Hence,%
\[
v_{G+vw}^{0}=\frac{3}{8}q(G)-\frac{3}{16}v_{G}^{01}=\frac{3}{8}(2v_{G}%
^{0}+v_{G}^{01})-\frac{3}{16}v_{G}^{01}=\frac{3}{4}v_{G}^{0}+\frac{3}{16}%
v_{G}^{01}.
\]
This proves part (b). The proof is complete.

\end{proof}

For each $n\geq1,$ let $P_{n\text{ }}$be the path with the vertex set
$\{v_{0},...,v_{n}\},$ and the edge set $\{v_{i-1}v_{i},i=1,...,n\}.$ Also,
let $S_{n\text{ }}$be the star with the vertex set $\{w_{0},...,w_{n}\},$ and
the edge set $\{w_{i-1}w_{n},i=1,...,n\}.$ So, $w_{n}$ is the center of the
star $S_{n}.$

\begin{lemma}
\label{L3} Let $\{a_{n}\}_{1}^{\infty}$ and $\{b_{n}\}_{1}^{\infty}$ be
sequences defined recursively by
\begin{align*}
a_{1}  & =\frac{3}{16},\,b_{1}=\frac{9}{16},\text{ and for }n>1\\
a_{n}  & =\frac{1}{2}a_{n-1}+\frac{3}{16}b_{n-1},\\
b_{n}  & =\frac{1}{2}a_{n-1}+\frac{9}{16}b_{n-1}.
\end{align*}
Then $v_{n}^{0}=a_{n},v_{n}^{01}=b_{n},$ $q(P_{n})=2a_{n}+b_{n},$ and
$b_{n}\geq\left(  \frac{9}{16}\right)  ^{n}$. Further, $w_{n}^{01}=\left(
\frac{9}{16}\right)  ^{n},$ and $q(S_{n})=2\left(  \frac{3}{4}\right)
^{n}-\left(  \frac{3}{4}\right)  ^{2n}.$
\end{lemma}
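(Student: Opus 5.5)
We prove the path statements by induction on $n$ using Lemma \ref{L4}(a), and the star statements by induction using Lemma \ref{L4}(b),(c); both rely on the fact that $P_n$ and $S_n$ are obtained from smaller graphs by adding a pendant edge to a new vertex.

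\emph{Paths.} The base case is $P_1$, which is a single edge $v_0v_1$. By Claim \ref{Claim}, $v_1^0=\frac{3}{16}=a_1$ and $v_1^{01}=\frac{9}{16}=b_1$. For $n>1$, $P_n=P_{n-1}+v_{n-1}v_n$, where $v_n$ is a new vertex attached to the endpoint $v_{n-1}$. Lemma \ref{L4}(a), applied with $v=v_{n-1}$ and $w=v_n$, gives exactly the recursion defining $a_n,b_n$, with $v^0_{n-1}=a_{n-1}$ and $v^{01}_{n-1}=b_{n-1}$ supplied by the induction hypothesis. Lemma \ref{L1} then gives $q(P_n)=2a_n+b_n$. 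For the inequality, all terms are nonnegative, so $b_n\ge \frac{9}{16}b_{n-1}$. Iterating from $b_1=\frac{9}{16}$ yields $b_n\ge(\frac{9}{16})^n$.

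\emph{Stars.} Let $S_0$ denote the single vertex $w_0$ with no edges. Then $q(S_0)=1$, $w^{01}_{S_0}=1$ and $w^0_{S_0}=0$, since every assignment of a free variable is a solution. The star $S_{n}$ is obtained from $S_{n-1}$ by attaching a new leaf to the center, but the center's label changes, so some relabeling is needed. We view $S_n$ as the center $c=w_n$ with $n$ pendant edges added one at a time, starting from the isolated vertex $c$. Each step adds an edge from $c$ to a new vertex, so Lemma \ref{L4}(b) applies with $v=c$. It gives $c^{01}\mapsto \frac{9}{16}c^{01}$, hence $w_n^{01}=(\frac{9}{16})^n$.

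For $q$, Lemma \ref{L4}(c) gives $q_j=\frac34 q_{j-1}+\frac{3}{16}(\frac{9}{16})^{j-1}$ with $q_0=1$. We solve this linear recurrence by verifying by induction the candidate $q_j=2(\frac34)^j-(\frac34)^{2j}$. At $j=0$ it gives $2-1=1$. For the inductive step we need
\[
\tfrac34\bigl(2(\tfrac34)^{j-1}-(\tfrac{9}{16})^{j-1}\bigr)+\tfrac{3}{16}(\tfrac{9}{16})^{j-1}=2(\tfrac34)^j-(\tfrac{9}{16})^{j-1}\bigl(\tfrac{12}{16}-\tfrac{3}{16}\bigr),
\]
and since $\frac{12}{16}-\frac{3}{16}=\frac{9}{16}$, the right-hand side equals $2(\frac34)^j-(\frac{9}{16})^j$, which is the candidate at $j$.

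The main obstacle is bookkeeping rather than mathematics. The star's center must be treated as a fixed vertex accumulating pendant edges, which requires starting from the edgeless one-vertex graph so that Lemma \ref{L4}(b) applies uniformly. Lemma \ref{L4} is stated for arbitrary graphs $G$, so the single-vertex base case is acceptable. Alternatively, one can start at $S_1$ using Claim \ref{Claim}, which gives $q(S_1)=2\cdot\frac{3}{16}+\frac{9}{16}=\frac{15}{16}=2\cdot\frac34-\frac{9}{16}$, consistent with the formula.
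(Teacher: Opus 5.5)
Your proof is correct and follows essentially the same route as the paper: induction on $n$, using Lemma~\ref{L4}(a) for the path and Lemma~\ref{L4}(b),(c) for the star, with the same algebraic check of the closed form for $q(S_n)$. The differences are cosmetic. You read off $q(P_n)=2a_n+b_n$ from Lemma~\ref{L1} instead of Lemma~\ref{L4}(c), and you start the star induction at the edgeless one-vertex graph (legitimately covered by Lemma~\ref{L4}) rather than at $S_1$, which also makes explicit the relabeling of the center that the paper leaves implicit.
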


\begin{proof}
\textbf{\ }We prove all statements by induction on $n.$ For $n=1,$ by Claim
\ref{Claim}, $v_{1}^{0}=\frac{3}{16},v_{1}^{01}=\frac{9}{16},$ i.e., by Lemma
\ref{L1}, $q(P_{1})=2v_{1}^{0}+v_{1}^{01}=2a_{1}+b_{1}.$ For $n>1,$ by Lemma
\ref{L4} (c),
\begin{align*}
q(P_{n})  & =\frac{3}{4}q(P_{n-1})+\frac{3}{16}b_{n-1}=\frac{3}{4}%
(2a_{n-1}+b_{n-1})+\frac{3}{16}b_{n-1}=\\
\frac{3}{2}a_{n-1}+\frac{15}{16}b_{n-1}  & =2(\frac{1}{2}a_{n-1}+\frac{3}%
{16}b_{n-1})+(\frac{1}{2}a_{n-1}+\frac{9}{16}b_{n-1})=2a_{n}+b_{n},
\end{align*}
and by part (a) $v_{n}^{0}=a_{n},v_{n}^{01}=b_{n}$.

For $n=1,b_{1}=\frac{9}{16},$ and by Lemma \ref{L4} (a) and induction
hypothesis $b_{n}=v_{P_{n}}^{01}=\frac{1}{2}v_{P_{n-1}}^{0}+\frac{9}%
{16}v_{P_{n-1}}^{01}>\frac{9}{16}b_{n-1}=\left(  \frac{9}{16}\right)  ^{n}.$

As for the star $S_{n},$ for $n=1$ we have $q(S_{1})=q(P_{1})=\frac{15}%
{16}=2(\frac{3}{4})^{1}-(\frac{3}{4})^{2},$ and, $w_{1}^{01}=\frac{9}{16}.$
For $n>1,$ by Lemma \ref{L4} (c),%
\begin{align*}
q(S_{n})  & =\frac{3}{4}q(S_{n-1})+\frac{3}{16}w_{n-1}^{01}\\
& =\frac{3}{4}\left(  2\left(  \frac{3}{4}\right)  ^{n-1}-\left(  \frac{3}%
{4}\right)  ^{2n-2}\right)  +\frac{3}{16}\left(  \frac{9}{16}\right)  ^{n-1}\\
& =\frac{3}{4}2\left(  \frac{3}{4}\right)  ^{n-1}-\frac{3}{4}\left(  \frac
{3}{4}\right)  ^{2n-2}+\frac{3}{16}\left(  \frac{3}{4}\right)  ^{2n-2}\\
& =2\left(  \frac{3}{4}\right)  ^{n}-\left(  \frac{3}{4}\right)  ^{2n}.
\end{align*}
Finally, by part (b), $w_{n}^{01}=\frac{9}{16}w_{n-1}^{01}=\left(  \frac
{9}{16}\right)  ^{n}.$
\end{proof}

\begin{lemma}
\label{L2}For each $s,t\geq1,$ it is
\[
b_{s}b_{t}<b_{s+t}.
\]

\end{lemma}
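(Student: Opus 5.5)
The plan is to treat the recursion of Lemma \ref{L3} as a linear dynamical system and read $b_{s+t}$ off a matrix power. Write $u_{n}=(a_{n},b_{n})^{T}$ and
\[
M=\begin{pmatrix}\frac{1}{2} & \frac{3}{16}\\ \frac{1}{2} & \frac{9}{16}\end{pmatrix},
\]
so that $u_{n}=Mu_{n-1}$ for $n>1$. The first step is to extend the sequences to $n=0$ by setting $a_{0}=0$, $b_{0}=1$. Then $Mu_{0}=(\frac{3}{16},\frac{9}{16})^{T}=u_{1}$, so $u_{n}=M^{n}u_{0}=M^{n}e_{2}$ for all $n\geq0$, where $e_{1},e_{2}$ are the standard basis vectors. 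In particular $(a_{t},b_{t})^{T}=M^{t}e_{2}$, i.e.\ $(a_{t},b_{t})^{T}$ is the second column of $M^{t}$.

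Next, write $M^{t}e_{1}=(c_{t},d_{t})^{T}$ for the first column of $M^{t}$. By linearity,
\[
u_{s+t}=M^{t}u_{s}=a_{s}M^{t}e_{1}+b_{s}M^{t}e_{2},
\]
and reading off the second coordinate gives
\[
b_{s+t}=d_{t}a_{s}+b_{t}b_{s}.
\]
It then suffices to show $d_{t}a_{s}>0$. All entries of $M$ are strictly positive, so every entry of $M^{t}$ is strictly positive for $t\geq1$, giving $d_{t}>0$. Also $a_{1}=\frac{3}{16}>0$, and $a_{n}\geq\frac{1}{2}a_{n-1}$ gives $a_{s}>0$ for all $s\geq1$ by induction. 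Hence $b_{s}b_{t}<b_{s+t}$.

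The same computation has a probabilistic reading that could replace the algebra. View $P_{s+t}$ as $P_{s}$ with a path of length $t$ glued at $v_{s}$. Since the two parts share only $v_{s}$ and their equations are independent, the endpoint $v_{s+t}$ is free with probability at least $b_{s}b_{t}$: with probability $b_{s}$ the vertex $v_{s}$ is free in $P_{s}$, and independently with probability $b_{t}$ the far end of the appended path is free. The extra term $d_{t}a_{s}$ accounts for the case where $v_{s}$ is forced but the endpoint is still free.

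There is no real obstacle. The only point that needs care is the bookkeeping convention $(a_{0},b_{0})=(0,1)$: one has to check that it reproduces $(a_{1},b_{1})$, so that $(a_{t},b_{t})^{T}$ is exactly the second column of $M^{t}$ and the decomposition of $u_{s+t}$ holds with $b_{t}$ as the coefficient of $b_{s}$.
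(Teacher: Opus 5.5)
Your proof is correct and is essentially the paper's argument in closed form. The paper proves by induction on $t$ the strengthened pair $b_sb_t<b_{s+t}$ and $b_sa_t<a_{s+t}$, which says exactly that $u_{s+t}-b_su_t$ is componentwise positive; your identity $u_{s+t}-b_su_t=a_sM^te_1$ (obtained from the convention $u_0=e_2$) unrolls that induction and also gives the exact gap $b_{s+t}-b_sb_t=a_sd_t$.
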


\begin{proof}
Let $s\geq1$ be fixed. We prove the statement by induction on $t.$ To be able
to carry out the induction step we prove a stronger statement%
\[
b_{s}b_{t}<b_{s+t}\quad\text{ and }\quad b_{s}a_{t}<a_{s+t}.
\]
For $t=1$ we get%
\begin{align*}
b_{s}b_{1}  & =b_{s}\frac{9}{16}<\frac{1}{2}a_{s}+\frac{9}{16}b_{s}%
=b_{s+1},\text{ }\\
b_{s}a_{1}  & =b_{s}\frac{3}{16}<\frac{1}{2}a_{s}+\frac{3}{16}b_{s}=a_{s+1}%
\end{align*}
as $a_{s}>0$ for all $s\geq1.$ Assume that $b_{s}b_{k}<b_{s+k}$ and
$b_{s}a_{k}<a_{s+k}$ has been proved for all $k<t$. Then%
\begin{align*}
b_{s}b_{t}  & =b_{s}(\frac{1}{2}a_{t-1}+\frac{9}{16}b_{t-1})<\frac{1}%
{2}a_{s+t-1}+\frac{9}{16}b_{s+t-1}=b_{s+t},\\
b_{s}a_{t}  & =b_{s}(\frac{1}{2}a_{t-1}+\frac{3}{16}b_{t-1})<\frac{1}%
{2}a_{s+t-1}+\frac{3}{16}b_{s+t-1}=a_{s+t}.
\end{align*}

\end{proof}

\subsection{Trees with extremal consistency probabilities}

The next theorem characterizes the trees that achieve extremal consistency probabilities.

\begin{theorem}
\bigskip\label{trees}For every tree $T_{n}$ with $n\geq3$ edges that is
neither a path or a star, it is
\[
q(S_{n})<q(T_{n})<q(P_{n}).
\]

\end{theorem}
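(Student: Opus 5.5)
The plan is to build every tree by repeatedly attaching pendant edges and to unroll Lemma \ref{L4}(c). Order the edges of $T_n$ so that $T_j$ (the first $j$ edges) is a tree for every $j$, and let $u_j$ be the vertex of $T_{j-1}$ to which the $j$-th edge is attached. Start from a single vertex, where $q=1$ and $u^{01}=1$, and set $b_0=1$, $a_0=0$. Iterating Lemma \ref{L4}(c) gives
\[
q(T_n)=\left(\tfrac34\right)^n+\tfrac{3}{16}\sum_{j=1}^{n}\left(\tfrac34\right)^{n-j}\,(u_j)^{01}_{T_{j-1}} .
\]
As a check, $n=1$ gives $15/16$. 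So $q(T_n)$ is increasing in each quantity $(u_j)^{01}_{T_{j-1}}$. For the star we always attach at the centre, so these quantities are $(9/16)^{j-1}$ by Lemma \ref{L3}. For the path we always attach at an endpoint, so they are $b_{j-1}$. Both claimed inequalities therefore reduce to the following vertex bound.

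\emph{Vertex bound.} For a vertex $u$ of a tree with $e$ edges,
\[
(9/16)^e\le u^{01}\le b_e .
\]
Equality on the left holds iff the tree is a star centred at $u$ (or $e\le1$). Equality on the right holds iff the tree is a path with $u$ an endpoint.

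The tool for the vertex bound is a branch decomposition. Once $x_u$ is fixed, the branches at $u$ (an edge $uc$ together with the subtree $T_c$ hanging from $c$) involve disjoint variables. Hence $u^{01}=\prod_i \beta_i$, where $\beta_i$ is the value of $u^{01}$ in branch $i$ alone. In branch $i$, $u$ is a leaf attached to $c$, so Lemma \ref{L4}(a) gives $\beta_i=\tfrac12 c^0+\tfrac9{16}c^{01}$, computed in $T_c$.

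For the lower bound, this gives $\beta_i\ge \tfrac9{16}c^{01}\ge(9/16)^{e_i}$ by induction, where $e_i$ is the number of edges of the branch. The first inequality is strict whenever $T_c$ has an edge, because then $c^0>0$. Hence $u^{01}\ge(9/16)^e$, with equality iff every branch is a single edge, i.e.\ the tree is a star centred at $u$.

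For the upper bound, I would prove jointly by induction on $e$ that $u^{01}\le b_e$ for every vertex of a tree with $e$ edges and that $q(T)\le 2a_e+b_e=q(P_e)$. Using Lemma \ref{L1}, $c^0=(q(T_c)-c^{01})/2$, so
\[
\beta_i=\tfrac14 q(T_c)+\tfrac5{16}c^{01}\le \tfrac14(2a_{e_i-1}+b_{e_i-1})+\tfrac5{16}b_{e_i-1}=b_{e_i},
\]
with equality iff $T_c$ is a path ending at $c$. With two or more branches, Lemma \ref{L2} gives $\prod\beta_i\le\prod b_{e_i}<b_e$. The bound on $q$ then follows from the unrolled formula, since each term satisfies $(u_j)^{01}\le b_{j-1}$. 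The main obstacle is making this joint induction consistent: the bound on $\beta_i$ uses both parts of the hypothesis on smaller trees.

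Strictness for the theorem is handled as follows. If $T_n$ is not a star, consider the last step. If every earlier step attained equality, then $T_{n-1}$ is a star centred at $u_n$. If moreover $n-1\ge2$, the centre is unique, so $T_n$ would be a star, a contradiction. Hence some term is strictly larger than $(9/16)^{j-1}$, which gives $q(S_n)<q(T_n)$.

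If $T_n$ is not a path, let $j$ be the first index with $T_j$ not a path. Then $u_j$ is an interior vertex of the path $T_{j-1}$, so $u_j$ has two branches there and Lemma \ref{L2} gives $(u_j)^{01}<b_{j-1}$. This gives $q(T_n)<q(P_n)$.
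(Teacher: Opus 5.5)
Your argument is correct and follows essentially the same route as the paper: a joint induction establishing the vertex bound $(9/16)^e\le u^{01}\le b_e$ together with the path bound, driven by Lemma~\ref{L4}(c) and, at vertices with two or more branches, Lemma~\ref{L2}. The differences are organizational. You unroll Lemma~\ref{L4}(c) into a positive combination of the $(u_j)^{01}_{T_{j-1}}$, and you bound the leaf case directly via $\beta=\tfrac14 q(T_c)+\tfrac5{16}c^{01}$ instead of by the paper's contradiction argument. You also locate the strict step explicitly rather than choosing which leaf to delete, which avoids the case the paper glosses over where deleting that leaf leaves a star or a path. In the star case only equality at the last step is needed, so ``every earlier step attained equality'' should read ``the last step attained equality.''
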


\begin{proof}
We prove the theorem by induction on the number of edges. For the induction
step it is convenient to prove a stronger statement: For every tree $T_{n}$
with $n\geq3$ edges that is neither a path or a star, it is $q(S_{n}%
)<q(T_{n})<q(P_{n})$, and furthermore, for every vertex $v$ of each tree
(including a star and a path) with $n\geq1$ edges it holds
\begin{equation}
\left(  \frac{9}{16}\right)  ^{n}\leq v^{01}\leq b_{n}.\label{2}%
\end{equation}
Preliminaries and values from the lemmata. Let $P_{n\text{ }}$be the path with
the vertex set $\{v_{0},...,v_{n}\},$ and the edge set $\{v_{i-1}%
v_{i},i=1,...,n\},$ and $S_{n}$ be a star with the center $w_{n}.$ We recall
that by Lemma \ref{L3},
\begin{align*}
q(S_{n})  & =2\left(  \frac{3}{4}\right)  ^{n}-\left(  \frac{3}{4}\right)
^{2n},w_{n}^{01}=(\frac{9}{16})^{n},\\
q(P_{n})  & =2a_{n}+b_{n},\text{ and }b_{n}\geq\left(  \frac{9}{16}\right)
^{n}.
\end{align*}
From the recurrence (given in Lemma \ref{L3}) we obtain the concrete values
$a_{2}=\frac{51}{16^{2}},b_{2}=\frac{105}{16^{2}},a_{3}=\frac{723}{16^{3}},$
$b_{3}=\frac{1353}{16^{3}}.$

\textit{Base case} $n=3.$ When $n=3$ there is no tree which is neither a path
nor a star, so the inequality in the statement reduces to showing
$\ q(S_{3})<q(P_{3}).$

{}Using the numbers above%
\[
q(S_{3})=\frac{2727}{16^{3}}<2\frac{723}{16^{3}}+\frac{1353}{16^{3}}%
=\frac{2799}{16^{3}}=q(P_{3})
\]
as required.

As to (\ref{2}), it needs to be proved for all trees $T$ with $n=1,2,$ and $3
$ edges. If $n=1,$ then $T=P_{1},$ and $v_{0}^{01}=v_{1}^{01}=b_{1}=\frac
{9}{16}.$ For $n=2,T=P_{2},$ and $v_{0}^{01}=v_{2}^{01}=b_{2}\geq(\frac{9}%
{16})^{2}\,.$ The vertex $v_{1}$ can be seen as the center $w_{2}$ of the star
$S_{2}$ and thus $w_{2}^{01}=(\frac{9}{16})^{2}.$ For $n=3,$ \thinspace
$T=S_{3}$ or $T=P_{3}.$ By symmetry it suffices to check vertices $v_{2}$ and
$v_{3}$ of $P_{3},$ and the center and a pendant vertex of $S_{3}.$

For $v_{3}$ it holds
\[
v_{3}^{01}=b_{3}\geq\left(  \frac{9}{16}\right)  ^{3}
\]
By Lemma \ref{L4} we have $v_{2\text{ }}^{01}=^{(b)}\frac{9}{16}b_{2}%
<^{(a)}\frac{1}{2}a_{2}+\frac{9}{16}b_{2}=b_{3}.$ Also $\frac{9}{16}%
b_{2}=\frac{9}{16}\frac{105}{16^{2}}>\left(  \frac{9}{16}\right)  ^{3},$ thus
$v_{2}^{01}$ satisfies (\ref{2}) \ as well. For $w_{3},$ the center of
$S_{3},$ By Lemma \ref{L3}
\[
w_{3}^{01}=\left(  \frac{9}{16}\right)  ^{3}<\frac{1353}{16^{3}}=b_{3}.
\]
Finally, (\ref{2}) will be shown for a pending vertex $w$ of $S_{3}:$ view
$S_{3}$ as $P_{2}+v_{1}w,$ where $v_{1}$ is the middle vertex of $P_{2}.$ By
Lemma \ref{L4} in $P_{2}$ it is $v_{1}^{0}=\frac{63}{16^{2}},v_{1}^{01}%
=\frac{81}{16^{2}}.$ Then $w_{P_{2}+v_{1}w}^{01}=\frac{1}{2}v_{1}^{0}+\frac
{9}{16}v_{1}^{01}=\left(  \frac{9}{16}\right)  ^{3}+\frac{8\cdot63}{16^{3}%
}=\frac{1233}{16^{3}}<\frac{1353}{16^{3}}=b_{3}.$ So (\ref{2}) holds for all
vertices when $n=3.$

\textit{Inductive step}. Let now $T$ be a tree with $n>3$ edges different from
a path and a star, and $vw$ be a pending edge with $\deg_{T}(w)=1$ such that
$T-vw$ is not a path. By induction hypothesis it holds $q(T-vw)<q(P_{n-1}),$
$v_{T-vw}^{01}\leq b_{n-1},$ and we get%
\begin{align*}
q(T)  & =q((T-vw)+vw)=^{\text{Lemma \ref{L4}(c)}}\frac{3}{4}q(T-vw)+\frac
{3}{16}v_{T-vw}^{01}\\
& <\frac{3}{4}q(P_{n-1})+\frac{3}{16}b_{n-1}=\frac{3}{4}(2a_{n-1}%
+b_{n-1})+\frac{3}{16}b_{n-1}\\
& =2(\frac{1}{2}a_{n-1}+\frac{3}{16}b_{n-1})+(\frac{1}{2}a_{n-1}+\frac{9}%
{16}b_{n-1})=2a_{n}+b_{n}=q(P_{n}).
\end{align*}
As for the left inequality, there is a pending edge $vw$ in $T$, $\deg
_{T}(w)=1,$ such that $T-vw$ is not a star. By induction hypothesis
$q(T-vw)>q(S_{n-1}),$ and $v_{T-vw}^{01}\geq\left(  \frac{9}{16}\right)
^{n-1}.$ Combining it with Lemma \ref{L4} we get
\begin{align*}
q(T)  & =\frac{3}{4}q(T-vw)+\frac{3}{16}v_{T-vw}^{01}\\
& >\frac{3}{4}\left(  2\left(  \frac{3}{4}\right)  ^{n-1}-\left(  \frac{9}%
{16}\right)  ^{n-1}\right)  +\frac{3}{16}\left(  \frac{9}{16}\right)  ^{n-1}\\
& =2\left(  \frac{3}{4}\right)  ^{n}-\left(  \frac{9}{16}\right)  ^{n}%
=q(S_{n}).
\end{align*}
Now we will prove (\ref{2}). First, let $\deg_{T}(w)=1,$ and $v$ be a vertex
of $T$ adjacent to $w$. Assume by contradiction that $b_{n}<w^{01}.$ Then
$2w^{0}+w^{01}=q(T)<q(P_{n})=2a_{n}+b_{n}$ implies $w^{0}<a_{n}.$ In turn we
get%
\begin{align*}
b_{n}  & <w^{01}\Longrightarrow\frac{1}{2}a_{n-1}+\frac{9}{16}b_{n-1}<\frac
{1}{2}v_{T-vw}^{0}+\frac{9}{16}v_{T-vw}^{01},\text{ and}\\
w^{0}  & <a_{n}\Longrightarrow\frac{1}{2}v_{T-vw}^{0}+\frac{3}{16}%
v_{T-vw}^{01}<\frac{1}{2}a_{n-1}+\frac{3}{16}b_{n-1}.
\end{align*}
Adding and rearranging the above inequalities leads to $b_{n-1}<v_{T-vw}%
^{01},$ which contradicts induction hypothesis.

On the other hand,
\[
w^{01}=\frac{1}{2}v_{T-vw}^{0}+\frac{9}{16}v_{T-vw}^{01}>\frac{9}{16}%
v_{T-vw}^{01}\geq\frac{9}{16}\left(  \frac{9}{16}\right)  ^{n-1}=\left(
\frac{9}{16}\right)  ^{n}
\]
by induction hypothesis. Now we consider a vertex $w$ with $\deg_{T}(w)>1.$ If
$T$ is a star, then $w$ is its center, and $w^{01}=(\frac{9}{16})^{n}.$ In the
other case, there is a vertex $z$, with $\deg_{T}(z)>1,$ adjacent to $w$. Let
$T_{w}$ and $T_{z}$ be the subtree of $T-wz$ containing the vertex $w$ and
$z,$ respectively. By similar reasoning as in Lemma \ref{L4} we have%
\[
w_{T}^{01}=w_{T_{w}}^{01}z_{T_{z}}^{01}\frac{9}{16}+w_{T_{w}}^{01}z_{T_{z}%
}^{0}\frac{1}{2}=w_{T_{w}}^{01}\left(  \frac{1}{2}z_{T_{z}}^{0}+\frac{9}%
{16}z_{T_{z}}^{01}\right)  =w_{T_{w}}^{01}y_{T_{z}+yz}^{01},
\]
where $y$ is a pending new vertex vertex in $T_{z}+yz.$ By induction
hypothesis, $\left(  \frac{9}{16}\right)  ^{s}\leq w_{T_{w}}^{01}\leq b_{s}$
and $\left(  \frac{9}{16}\right)  ^{t+1}\leq y_{T_{z}+yz}^{01}\leq b_{t+1},$
where $s$ and $t$ is the number of edges in $T_{w}$ and $T_{z},$ respectively.
Clearly, $s+t=n-1.$ Thus $w_{T}^{01}=w_{T_{w}}^{01}y_{T_{z}+yz}^{01}%
\geq\left(  \frac{9}{16}\right)  ^{n},$ while, by Lemma \ref{L2}, we get
$w_{T}^{01}\leq b_{s}b_{t+1}<b_{n}$. The proof is complete.
\end{proof}

\subsection{Probability of a path}

In this subsection, we derive an explicit formula for $q(P_{n}).$

\bigskip The roots of the quadratic equation $x^{2}=\frac{17}{16}x-\frac
{3}{16}$will be denoted by $s=\frac{17+\sqrt{97}}{32},$ and $t=\frac
{17-\sqrt{97}}{32}.$

\begin{lemma}
\label{L5} Let $\{d_{n}\}_{1}^{\infty}$ be a sequence defined recursively by
$d_{n}=\frac{17}{16}d_{n-1}-\frac{3}{16}d_{n-2},$ for $n\geq3.$ Then the
particular solution satisfying $d_{1}=a,d_{2}=b$ is given by%
\[
d_{n}=\frac{b-at}{s(s-t)}s^{n}+\frac{-b+as}{t(s-t)}t^{n}.
\]

\end{lemma}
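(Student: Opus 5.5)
The plan is to use the standard theory of second-order linear recurrences with constant coefficients. The characteristic polynomial of $d_{n}=\frac{17}{16}d_{n-1}-\frac{3}{16}d_{n-2}$ is $x^{2}-\frac{17}{16}x+\frac{3}{16}$. Its roots are exactly $s$ and $t$ as defined before the lemma. They are distinct because the discriminant is $\frac{289}{256}-\frac{192}{256}=\frac{97}{256}>0$, and both are nonzero because $st=\frac{3}{16}$.

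\textbf{Step 1: the general formula solves the recurrence.} For arbitrary constants $A,B$, the sequence $d_{n}=As^{n}+Bt^{n}$ satisfies the recurrence for all $n\geq3$. Indeed, substituting gives $As^{n-2}(s^{2}-\frac{17}{16}s+\frac{3}{16})+Bt^{n-2}(t^{2}-\frac{17}{16}t+\frac{3}{16})=0$, since $s$ and $t$ are roots of the characteristic polynomial.

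\textbf{Step 2: uniqueness.} A sequence satisfying the recurrence is uniquely determined by $d_{1}$ and $d_{2}$; this follows by a trivial induction on $n$. So it suffices to find $A,B$ with $As+Bt=a$ and $As^{2}+Bt^{2}=b$.

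\textbf{Step 3: solving for $A$ and $B$.} This $2\times2$ system has determinant $st(t-s)\neq0$. Multiply the first equation by $t$ and subtract it from the second to get $As(s-t)=b-at$, so $A=\frac{b-at}{s(s-t)}$. Symmetrically, multiply the first equation by $s$ and subtract to get $Bt(t-s)=b-as$, so $B=\frac{-b+as}{t(s-t)}$. These are exactly the stated coefficients.

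\textbf{Step 4: final check.} Verify directly that the formula gives $d_{1}=\frac{(b-at)-(b-as)}{s-t}=a$ and $d_{2}=\frac{s(b-at)-t(b-as)}{s-t}=b$. Together with Steps 1 and 2, this completes the proof.

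There is no genuine obstacle. The only points needing care are confirming that $s\neq t$ and $s,t\neq0$, so that the divisions are legitimate, and keeping track of signs when solving for $B$.
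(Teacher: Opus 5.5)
Your proof is correct and follows the same route as the paper: take the general solution $c_1 s^n + c_2 t^n$ built from the characteristic roots, then solve the $2\times 2$ system given by $d_1=a$, $d_2=b$ for the constants. You also make explicit what the paper leaves to ``standard methods'': that $s\neq t$ and $st\neq 0$, that two initial terms determine the sequence, and the elimination and check themselves.
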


\begin{proof}
The general solution of the recurrent relation is
\[
d_{n}=c_{1}s^{n}+c_{2}t^{n},
\]
where $s$ and $t$ are the roots of the characteristic equation. The constants
$c_{1}$ and $c_{2}$ satisfy the system%
\[
a=c_{1}s^{1}+c_{2}t^{1},\text{ }b=c_{1}s^{2}+c_{2}t^{2}
\]
Using standard methods yields
\[
c_{1}=\frac{b-a\cdot t}{s(s-t)}\text{ and }c_{2}=\frac{-b+a\cdot s}{t(s-t)}
\]
The result follows.
\end{proof}

By Lemma \ref{L3}, $q(P_{n})=2a_{n}+b_{n}.$ A direct computation gives
$q(P_{1})=\frac{15}{16},$ and $q(P_{2})=\frac{207}{256}.$ Now we state the
main result of this subsection$:$

\begin{theorem}
\label{Tpath} \ \bigskip Let $c=\frac{q(P_{2})-q(P_{1})\cdot t}{s(s-t)}.$
Then, for $n\geq1,$%
\[
q(P_{n})=cs^{n}+(1-c)t^{n}\approx1.16\cdot s^{n}-0.16\cdot t^{n},
\]

\end{theorem}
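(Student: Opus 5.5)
The plan is to show that $q(P_n)=2a_n+b_n$ satisfies the linear recurrence of Lemma \ref{L5}, and then to invoke that lemma with the initial values $q(P_1)=\frac{15}{16}$ and $q(P_2)=\frac{207}{256}$.

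\textbf{Step 1: a second-order recurrence for $a_n$ and $b_n$.} By Lemma \ref{L3}, the vector $(a_n,b_n)$ evolves by the matrix
\[
M=\begin{pmatrix}1/2 & 3/16\\ 1/2 & 9/16\end{pmatrix}.
\]
Its trace is $\frac{1}{2}+\frac{9}{16}=\frac{17}{16}$ and its determinant is $\frac{9}{32}-\frac{3}{32}=\frac{3}{16}$. By Cayley--Hamilton, $M^2=\frac{17}{16}M-\frac{3}{16}I$. Applying this identity to $(a_{n-2},b_{n-2})$ shows that each coordinate satisfies
\[
x_n=\tfrac{17}{16}x_{n-1}-\tfrac{3}{16}x_{n-2}\qquad (n\ge 3).
\]
This can also be checked by direct elimination: from the first recurrence, $b_{n-1}=\frac{16}{3}(a_n-\frac12 a_{n-1})$; substituting into the second recurrence gives the claimed relation.

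\textbf{Step 2: transfer to $q(P_n)$.} Since $q(P_n)=2a_n+b_n$ is a fixed linear combination of $a_n$ and $b_n$, it satisfies the same recurrence. Lemma \ref{L5} with $a=q(P_1)$ and $b=q(P_2)$ then gives
\[
q(P_n)=c\,s^n+c_2\,t^n,\qquad c_2=\frac{-q(P_2)+q(P_1)s}{t(s-t)},
\]
with $c$ as defined in the theorem. The initial values follow from Lemma \ref{L3}: $a_2=\frac{51}{256}$ and $b_2=\frac{105}{256}$, so $q(P_2)=\frac{207}{256}$.

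\textbf{Step 3: show $c_2=1-c$.} This is the main point to verify, and the cleanest route is to extend the recurrence to $n=0$. Set $q(P_0)=1$, which is the empty system on one vertex and is always consistent; equivalently, take $(a_0,b_0)=(0,1)$. Then $M(0,1)^T=(3/16,9/16)^T=(a_1,b_1)$, so the recurrence holds from index $0$. The general solution $c_1s^n+c_2t^n$ then satisfies $c_1+c_2=q(P_0)=1$, and since the solution is uniquely determined by its values at $n=1,2$, we get $c_2=1-c$.

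As a consistency check, $\frac{17}{16}\cdot\frac{15}{16}-\frac{3}{16}\cdot 1=\frac{255-48}{256}=\frac{207}{256}$, which matches $q(P_2)$. Finally, evaluating $s\approx0.839$ and $t\approx0.223$ numerically gives $c\approx1.16$.
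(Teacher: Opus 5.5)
Your proof is correct and follows the paper's route: derive $x_n=\frac{17}{16}x_{n-1}-\frac{3}{16}x_{n-2}$ from the first-order system of Lemma \ref{L3}, apply Lemma \ref{L5}, and check that the two coefficients sum to $1$. The differences are presentational. You use Cayley--Hamilton instead of elimination, and you apply Lemma \ref{L5} once to $q(P_n)$ rather than to $a_n$ and $b_n$ separately; your extension to $q(P_0)=1$ is a conceptual restatement of the paper's Vieta computation $\frac{17}{16}q(P_1)-q(P_2)=\frac{3}{16}$, which is exactly your ``consistency check''.
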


where $s=\frac{17+\sqrt{97}}{32},t=\frac{17-\sqrt{97}}{32}.$

\begin{proof}
$\ $Let $\{a_{n}\}_{1}^{\infty}$ and $\{b_{n}\}_{1}^{\infty}$ be sequences
defined in Lemma \ref{L3}.

Now a second order recurrence relation for $\{b_{n}\}_{1}^{\infty}$ will be
derived.
\begin{align*}
\frac{1}{2}a_{n-1}  & =b_{n}-\frac{9}{16}b_{n-1},\\
a_{n}  & =b_{n}-\frac{9}{16}b_{n-1}+\frac{3}{16}b_{n-1}=b_{n}-\frac{6}%
{16}b_{n-1},\\
a_{n-1}  & =b_{n-1}-\frac{6}{16}b_{n-2},\\
b_{n}  & =\frac{1}{2}(b_{n-1}-\frac{6}{16}b_{n-2})+\frac{9}{16}b_{n-1}%
=\frac{17}{16}b_{n-1}-\frac{3}{16}b_{n-2}.\text{ }%
\end{align*}
Thus $\{b_{n}\}$ satisfies the same recurrence relation as in Lemma \ref{L5},
so
\[
b_{n}=\frac{b_{2}-b_{1}\cdot t}{s(s-t)}s^{n}+\frac{-b_{2}+b_{1}\cdot
s}{t(s-t)}t^{n}
\]
Similarly, one can check that $\{a_{n}\}_{1}^{\infty}$ follows the same
recurrence relation%
\[
a_{n}=\frac{17}{16}a_{n-1}-\frac{3}{16}a_{n-2}
\]
so, again by Lemma \ref{L5},
\[
a_{n}=\frac{a_{2}-a_{1}\cdot t}{s(s-t)}s^{n}+\frac{-a_{2}+a_{1}\cdot
s}{t(s-t)}t^{n}.
\]
By Lemma \ref{L3}, $q(P_{n})=2a_{n}+b_{n},$ and substituting expression for
$a_{n}$ and $b_{n},$ we get
\begin{align*}
q(P_{n})  & =2a_{n}+b_{n}\\
& =\frac{(2a_{2}+b_{2})-(2a_{1}+b_{1})\cdot t}{s(s-t)}s^{n}+\frac
{-(2a_{2}+b_{2})+(2a_{1}+b_{1})\cdot s}{t(s-t)}t^{n}\\
& =\frac{q(P_{2})-q(P_{1})\cdot t}{s(s-t)}s^{n}+\frac{-q(P_{2})+q(P_{1})\cdot
s}{t(s-t)}t^{n}.
\end{align*}
To finish the proof it needs to be shown that
\[
\frac{q(P_{2})-q(P_{1})\cdot t}{s(s-t)}+\frac{-q(P_{2})+q(P_{1})\cdot
s}{t(s-t)}=1.
\]
A simple manipulation and Vieta's formulae $s+t=\frac{17}{16},st=\frac{3}{16}
$ yields%
\begin{align*}
\frac{q(P_{2})-q(P_{1})\cdot t}{s(s-t)}+\frac{-q(P_{2})+q(P_{1})\cdot
s}{t(s-t)}  & =\frac{-q(P_{2})+q(P_{1})(s+t)}{st}=\\
\frac{-\frac{207}{256}+\frac{15}{16}\frac{17}{16}}{\frac{3}{16}}  &
=\frac{15\cdot17-207}{48}=1.
\end{align*}
Therefore, $q(P_{n})=cs^{n}+(1-c)t^{n}.$ This concludes the proof.
\end{proof}

\subsection{\bigskip Cycle-free graph with maximum probability}

In this section, we consider cycle-free graphs (i.e., forests) without
isolated vertices, since isolated vertices would correspond to variables that
are not active in any equation. Clearly, we have $n>m$; that is, the number of
vertices exceeds the number of edges. We characterize the cycle-free graph
that maximizes the consistency probability. The next two lemmas provide key
ingredients for proving the uniqueness of the extremal graph.

\begin{lemma}
\label{L7} Let $C_{1},...,C_{t}$ be connectivity components of a graph $G.$
Then
\[
q(G)=\prod\limits_{i=1}^{k}q(C_{i}).
\]

\end{lemma}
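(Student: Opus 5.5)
The approach is to show that, for a randomly generated system $\mathcal{S}$ on $G$, the events ``the subsystem on component $C_i$ is consistent'' are mutually independent. We then show that $\mathcal{S}$ is consistent exactly when all of these events occur. (The product in the statement should run over $i=1,\dots,t$.)

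First, I would split $\mathcal{S}$ into subsystems $\mathcal{S}_1,\dots,\mathcal{S}_t$. Here $\mathcal{S}_i$ consists of the equations $f_e=0$ for the edges $e$ of $C_i$. Since every edge lies in exactly one component, this is a partition of the equations. The variable sets $V(C_1),\dots,V(C_t)$ are pairwise disjoint and together cover $X$, because an isolated vertex forms its own component with no equations.

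Second, I would prove that $\mathcal{S}$ is solvable if and only if every $\mathcal{S}_i$ is solvable. The forward direction is immediate: restricting a solution of $\mathcal{S}$ to $V(C_i)$ gives a solution of $\mathcal{S}_i$. For the converse, take a solution $a^{(i)}\in\mathbb{F}_2^{V(C_i)}$ of each $\mathcal{S}_i$. Because the vertex sets are disjoint and cover $X$, these concatenate into a single assignment $a\in\mathbb{F}_2^n$. Each equation $f_e=0$ with $e\in C_i$ depends only on variables in $V(C_i)$, so $a$ satisfies it. For a component that is a single isolated vertex, the subsystem is empty and $q=1$, so it contributes trivially.

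Third, I would use independence. The polynomials $f_e$ are chosen independently, and $\mathcal{S}_i$ is a function only of $\{f_e: e\in E(C_i)\}$. These families are disjoint, so the events $A_i=\{\mathcal{S}_i \text{ solvable}\}$ are independent. Moreover, $\Pr(A_i)=q(C_i)$, because the random system restricted to $C_i$ is exactly a randomly generated system on the graph $C_i$. Therefore $q(G)=\Pr(\bigcap_i A_i)=\prod_i q(C_i)$.

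Alternatively, the same result follows by counting. We have $N(G)=\prod_i N(C_i)$, since a system on $G$ is a tuple of systems on the components and is solvable iff each of them is. We also have $2^{4m}=\prod_i 2^{4|E(C_i)|}$. Dividing gives the product formula.

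There is no real obstacle here. The only step needing care is the gluing of component solutions into a global solution, which relies on the disjointness of the components' variable sets; this is the same independence argument already used in Proposition \ref{BB}(2).
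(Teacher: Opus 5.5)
Your proof is correct and takes the same approach as the paper's one-line argument. The components give subsystems on disjoint variable sets whose polynomials are chosen independently, so consistency of the whole system is the intersection of independent events, and its probability factors as $\prod_{i=1}^{t} q(C_i)$. You spell out the gluing of component solutions and where the independence comes from, which the paper leaves implicit, and you are right that the product should run to $t$, not $k$.
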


\begin{proof}
Components $C_{1},...,C_{t}$ correspond to systems of equations on disjoint
sets of variables. Therefore their solutions are independent of each other.
The result follows.
\end{proof}

\begin{lemma}
\label{L6}Let $k\geq1,$ and $d\geq k+2.$ Then\bigskip%
\[
q(P_{k+1})\cdot q(P_{d-1})>q(P_{d})\cdot q(P_{k}).
\]

\end{lemma}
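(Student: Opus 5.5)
The plan is to work directly with the closed form of Theorem \ref{Tpath}, $q(P_{n})=cs^{n}+(1-c)t^{n}$, and show that the difference
\[
D=q(P_{k+1})\,q(P_{d-1})-q(P_{d})\,q(P_{k})
\]
factors into terms of known sign. Write $u=c$ and $w=1-c$. When both products are expanded, the $u^{2}$-terms give $u^{2}s^{k+d}$ in each product and cancel. Likewise the $w^{2}$-terms give $w^{2}t^{k+d}$ in each and cancel. Only the cross terms remain:
\[
D=uw\left(s^{k+1}t^{d-1}+t^{k+1}s^{d-1}-s^{d}t^{k}-t^{d}s^{k}\right).
\]
Factoring out $s^{k}t^{k}$ and setting $j=d-k-1\geq1$ (this is where $d\geq k+2$ is used), the bracket becomes
\[
st^{j}+ts^{j}-s^{j+1}-t^{j+1}=-(s-t)(s^{j}-t^{j}).
\]
Hence
\[
D=-uw\,s^{k}t^{k}(s-t)(s^{j}-t^{j}).
\]

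Next, determine the signs. Since $97<17^{2}$, both $s$ and $t$ are positive, and $s>t$. So $s^{k}t^{k}>0$, $s-t>0$ and $s^{j}-t^{j}>0$ for $j\geq1$. It therefore suffices to show $uw=c(1-c)<0$, which holds if $c>1$.

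The only real check, and the one genuinely non-mechanical point, is establishing $c>1$ rigorously rather than from the numerical value $1.16$. From the proof of Theorem \ref{Tpath},
\[
1-c=\frac{-q(P_{2})+q(P_{1})\,s}{t(s-t)}.
\]
Since $t(s-t)>0$, we get $c>1$ if and only if $q(P_{1})\,s<q(P_{2})$, that is,
\[
s<\frac{207/256}{15/16}=\frac{207}{240}=0.8625 .
\]
This reduces to $17+\sqrt{97}<27.6$, i.e.\ $\sqrt{97}<10.6$, which is true since $10.6^{2}=112.36>97$. Then $c>1>0$, so $c(1-c)<0$, and therefore $D>0$, which is the claimed inequality.
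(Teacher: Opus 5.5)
Your proof is correct and follows the paper's argument: you expand the closed form $q(P_n)=cs^{n}+(1-c)t^{n}$, cancel the $c^{2}$ and $(1-c)^{2}$ terms, and conclude from $c(1-c)<0$ together with $s>t>0$. Your factorization $-(s-t)(s^{j}-t^{j})$ is a tidier version of the paper's rearrangement, and your explicit checks that $t>0$ and $c>1$ (the latter via $q(P_1)s<q(P_2)$, i.e.\ $\sqrt{97}<10.6$) rigorously supply steps the paper only asserts from the numerical value $c\approx 1.16$.
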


\begin{proof}
As $q(P_{n})=cs^{n}+(1-c)t^{n}$, we need to prove
\[
\left(  cs^{k+1}+(1-c)t^{k+1}\right)  (cs^{d-1}+(1-c)t^{d-1})>(cs^{d}%
+(1-c)t^{d})(cs^{k}+(1-c)t^{k}).
\]
Multiplying out factors and subtracting matching terms yields
\[
c(1-c)(s^{d-1}t^{k+1}+s^{k+1}t^{d-1})>c(1-c)(s^{d}t^{k}+s^{k}t^{d}).
\]
As $c>1,$ it is $c(1-c)<0,$ hence
\begin{align*}
s^{d-1}t^{k+1}+s^{k+1}t^{d-1}  & <s^{d}t^{k}+s^{k}t^{d},\\
s^{k}t^{d-1}(s-t)  & <s^{d-1}t^{k}(s-t),\\
t^{d-1-k}  & <s^{d-1-k}.
\end{align*}
The inequality holds as $s>t,$ and $d>k+1.$
\end{proof}

\begin{theorem}
\label{Main}Let G be a cycle-free graph on $n$ vertices and $m<n$ edges
without isolated vertices. Then $\max q(G)$ is attained by a graph whose all
components $T_{i}$ are paths, and the number of vertices $n(T_{i})$ of those
components are as equal as possible; i.e.,%
\[
\left\vert n(T_{i})-n(T_{j})\right\vert \leq1\text{ for all }i,j.
\]

\end{theorem}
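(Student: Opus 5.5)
A forest on $n$ vertices with $m$ edges and no isolated vertices has exactly $n-m$ components, each a tree with at least one edge. So the number of components is fixed at $t=n-m$, and the edge counts $m_1,\dots,m_t\ge 1$ of the components satisfy $\sum m_i=m$. By Lemma \ref{L7}, $q(G)=\prod_{i=1}^{t} q(T_i)$. The plan is to optimize first the shape of each component with its edge count fixed, and then the distribution of edges among the components.

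\emph{Step 1: every component may be taken to be a path.} For a component with $m_i$ edges we show $q(T_i)\le q(P_{m_i})$, with equality only if $T_i$ is a path. For $m_i\le 2$ every tree is a path. For $m_i=3$ the only trees are $P_3$ and $S_3$, and $q(S_3)<q(P_3)$ was verified in the base case of Theorem \ref{trees}. For $m_i\ge 3$ in general, Theorem \ref{trees} gives $q(T_i)<q(P_{m_i})$ when $T_i$ is neither a path nor a star, and the star satisfies $q(S_{m_i})<q(T)$ for any other such tree $T$. For $m_i\ge 4$ such trees exist, so the star is also strictly below the path. Since all factors are positive, replacing each component by a path with the same number of edges does not decrease the product, and it strictly increases it unless every component was already a path. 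Therefore a maximizer has the form $\prod_{i=1}^{t} q(P_{m_i})$.

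\emph{Step 2: balancing.} Suppose two path components have edge counts $d$ and $k$ with $d\ge k+2$. Replace them by paths with $d-1$ and $k+1$ edges. Both new counts are still at least $1$, and the total numbers of vertices and edges are unchanged. By Lemma \ref{L6}, $q(P_{k+1})q(P_{d-1})>q(P_d)q(P_k)$, so the product strictly increases. The quantity $\sum m_i^2$ strictly decreases under this move, so repeated moves terminate. Hence any maximizer has $|m_i-m_j|\le 1$ for all $i,j$. Because $n(T_i)=m_i+1$ for a tree, this is exactly the condition $|n(T_i)-n(T_j)|\le 1$.

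\emph{Step 3: existence.} The set of forests on $n$ vertices with no isolated vertices is finite, so a maximum of $q$ exists. By Steps 1 and 2 every maximizer is a disjoint union of paths with as-equal-as-possible sizes. All such graphs are isomorphic, since the multiset $\{m_i\}$ is determined by $m$ and $t$, so the maximizer is in fact unique up to isomorphism.

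The main obstacle is Step 1 when the component is a star. The star case is not stated directly in Theorem \ref{trees}, so the strict inequality $q(S_{m_i})<q(P_{m_i})$ has to be obtained by chaining through a tree that is neither a path nor a star, as above. The remaining steps are bookkeeping: the fixed number of components, and keeping each edge count at least $1$ after a balancing move.
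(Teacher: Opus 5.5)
Your proof is correct and follows the paper's argument: Theorem \ref{trees} with Lemma \ref{L7} makes every component of a maximizer a path, and Lemma \ref{L6} then forces the path lengths to differ by at most one. Your explicit handling of the star case (the $n=3$ base case, and chaining through a tree that is neither a path nor a star when there are at least four edges) fills in a step the paper leaves implicit, and your existence and uniqueness remarks are a small extra.
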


\begin{proof}
Let $G$ be a a cycle-free graph with the maximum consistency probability. As
$G$ is a forest with no isolated vertices, each component is a tree with at
least one edge. By Theorem \ref{trees}, each component $C_{i}$ is a paths, and
by Lemma \ref{L7}, $q(G)=\prod\limits_{i}^{{}}q(C_{i}).$ If $G$ contained two
components $P_{k}$ and $P_{d}$, where $d\geq k+2,$ a graph resulting from
replacing these two paths by $P_{d-1}$ and $P_{k+1}$ would have greater
consistency probability, see Lemma \ref{L6}. Therefore, in $G$, the orders of
any two components of must differ by at most $1.$
\end{proof}

\subsection{Consistency probability of a cycle}

As usual, the cycle with $n$ edges is denoted by $C_{n}.$ We propose the
following conjecture

\begin{conjecture}
The consistency probability of the cycle $C_{n}$ is given by%
\[
q(C_{n})=s^{n}+t^{n}-\frac{1}{4^{n}}-\frac{1}{8^{n-1}},
\]
where $s=\frac{17+\sqrt{97}}{32},t=\frac{17-\sqrt{97}}{32}.$
\end{conjecture}

As a supporting evidence we have used a computer to calculate the values of
$q(C_{n})$ for $n=3,...,7,$ and in each case the computed values agree with
those given by the formula.\bigskip

\noindent\textbf{Conflict of Interest\medskip}

The manuscript is original, has not been published previously, and is not
under consideration for publication elsewhere. All authors have approved the
submission. None of the authors have a conflict of interest to disclose.


\begin{thebibliography}{99}                                                                                               %
\bibitem {Ach00}D. Achlioptas. 2000. "Setting 2 variables at a time yields a
new lower bound for random 3-SAT (extended abstract)". \textit{Proc. 32nd ACM
Symposium on Theory of Computing}: 28--37.

\bibitem {Bard}G. V. Bard. 2009. \textit{Algebraic Cryptanalysis}. Springer.

\bibitem {CR}V. Chv\'{a}tal and B. Reed.\emph{\ 1992. "}Mick gets some (the
odds are on his side)". \emph{In 33th Annual Symposium on Foundations of
Computer Science}: 620--627. IEEE Comput. Soc. Press.

\bibitem {DSS}J. Ding, A. Sly, and N. Sun. 2015.\emph{\ "}Proof of the
Satisfiability Conjecture for Large k", \emph{In Proceedings of the 47-th
Annual ACM Symposium on Theory of Computing STOC 2015}: 59--68.

\bibitem {MiniSat}N. E\'{e}n, N. S\"{o}rensson. \emph{MiniSat home page}, http://minisat.se.

\bibitem {Goerdt92}A. Goerdt. 1992. "A threshold for unsatisfiability".
\emph{Mathematical Foundations of Computer Science}, 17th Intl. Symposium,
I.M. Havel and V. Koubek, Eds., Lecture Notes in Computer Science 629:
264--274, Springer.

\bibitem {JSV00}S. Janson, Y.C. Stamatiou, and M. Vamvakari. 2000. "Bounding
the unsatisfiability threshold of random 3-SAT". \emph{Rand. Struc. Alg}. 17:103-116.

\bibitem {Odlyzko}B.A LaMacchia, and A.M Odlyzko. 1991. "Solving large sparse
linear systems over finite fields". In A. J. Menezes \& S. A. Vanstone (Eds.),
\emph{Advances in Cryptology -- CRYPTO '90 }(Lecture Notes in Computer
Science, 537: 109--133). Springer, Heidelberg.

\bibitem {iS09}I. Semaev. 2009. "Sparse algebraic equations over finite
fields". \emph{SIAM J. Comput., 39(2)}: 388--409.

\bibitem {iS2013}I. Semaev. 2013. "Improved agreeing-gluing algorithm".
\emph{Math. in Comp. Science} 7(3): 321--339.

\bibitem {GS}G. B. Sorkin. 2001. "Some Notes on Random Satisfiability". In
SAGA 2001. \emph{Lecture Notes in Computer Science}, 2264: 117-130. Springer.
\end{thebibliography}
\end{document}